\DeclareMathOperator{\rank}{rank}
\DeclareMathOperator{\Rep}{Rep}
\DeclareMathOperator{\Sem}{Sem}
\DeclareMathOperator{\Id}{Id}
\DeclareMathOperator{\PSU}{PSU}
\DeclareMathOperator{\sVec}{sVec}
\DeclareMathOperator{\Hom}{Hom}
\DeclareMathOperator{\Irr}{Irr}
\DeclareMathOperator{\1}{\textbf{1}}
\DeclareMathOperator{\diag}{diag}
\newcommand{\CC}{\mathcal{C}}
\newcommand{\DD}{\mathcal{D}}
\newcommand{\KK}{\mathcal{K}}
\newcommand{\FF}{\mathcal{F}}
\newcommand{\BB}{\mathcal{B}}
\newcommand{\mfs}{\mathfrak{s}}
\newcommand{\mft}{\mathfrak{t}}
\newcommand{\mfa}{\mathfrak{a}}
\newcommand{\mfc}{\mathfrak{c}}
\newcommand{\mfb}{\mathfrak{b}}
\newcommand{\mfd}{\mathfrak{d}}
\newcommand{\hT}{\hat{T}}
\newcommand{\hS}{\hat{S}}
\newcommand{\tS}{\tilde{S}}
\newcommand{\one}{\mathbf{1}}
\newcommand{\Q}{\mathbf{Q}}
\newcommand{\Z}{\mathbb{Z}}
\newtheorem{theorem}{Theorem}[section]
\newtheorem*{theorem*}{Theorem}
\theoremstyle{definition}
\newtheorem{example}[theorem]{Example}
\theoremstyle{remark}
\newtheorem{remark}[theorem]{Remark}
\numberwithin{equation}{section}
\begin{document}

\title[Near-group centers and super-modular categories]{On near-group centers and super-modular categories}


\author{Eric C. Rowell}
\address{Department of Mathematics\\
    Texas A\&M University\\
    College Station, TX 77843-3368\\
    U.S.A.}
\curraddr{}
\email{rowell@math.tamu.edu}
\thanks{}

\author{Hannah Solomon}
\address{Department of Mathematics\\
    Texas A\&M University\\
    College Station, TX 77843-3368\\
    U.S.A.}
\curraddr{}
\email{hmsolomon@tamu.edu}
\thanks{}

\author{Qing Zhang}
\address{Department of Mathematics\\
    Purdue University\\
    West Lafayette, IN 47907\\
    U.S.A.}
\curraddr{}
\email{zhan4169@purdue.edu}
\thanks{}
\subjclass[2020]{Primary}

\date{}

\begin{abstract} The construction and classification of super-modular categories is an ongoing project, of interest in algebra, topology and physics. In a recent paper, Cho, Kim, Seo and You produced two mysterious families of super-modular data, with no known realization.
We show that these data are realized by modifying the Drinfeld centers of near-group fusion categories associated with the groups $\mathbb Z/6$ and $\mathbb Z/2\times \mathbb Z/4$.  The methods we develop have wider applications and we describe some of these, with a view towards understanding when near-group centers provide super-modular categories.
\end{abstract}

\maketitle

\section{Introduction}\label{intro}

Besides their interest in algebraic category theory and topology, 
modular (resp. super-modular) categories are important in condensed matter physics as they describe bosonic (resp. fermionic) topological phases of matter in two spacial dimensions (see eg. \cite{RSW,16fold}).   This connection goes back to the mathematical study of conformal field theory in \cite{MooreSieberg,AMV}.  It is of substantial interest to classify these categories both for these applications and their intrinsic beauty.

It is known \cite{BNRW1,JMNR} that for any $r$ there are finitely many modular (resp. super-modular) categories with precisely $r$ isomorphism classes of simple objects, i.e. \textbf{rank} $r$.  Although a complete classification of modular (resp. super-modular) categories is probably out of reach without some general structure theorems, 
 for small $r$, classifications are known \cite{RSW,BNRW2,NRWW,bruillard2020classification,PPRZ}, at least up to \emph{modular data}.

From any modular category $\CC$ one obtains a (projective) representation of the modular group $\operatorname{SL}(2,\Z)$ as the mapping class group of the torus.  This representation is determined by a pair of matrices $(S,T)$ called the {modular data} of $\CC$.  The matrices $S$ and $T$ satisfy a number of remarkable constraints, including the key result of \cite{NS10} that says that the $\operatorname{SL}(2,\Z)$ representation factors through $\operatorname{SL}(2,\Z/N)$ for some minimal $N$, called the \textbf{level} of the representation, and that $N$ is the (finite) order of the $T$-matrix.
The paper \cite{NRWW} introduced a computational method for classifying (low) rank $r$ modular categories roughly as follows:
\begin{enumerate}
    \item[Step 1] Construct all irreducible prime-power-level representations of $\operatorname{SL}(2,\Z)$ of dimension at most $r$, with the property that the image of $\mfs:=\begin{pmatrix} 0 & -1\\ 1 & 0\end{pmatrix}$ is symmetric and the image of $\mft:=\begin{pmatrix} 1 & 1\\ 0 & 1\end{pmatrix}$   is diagonal.  That is, all such representations factoring over $\operatorname{SL}(2,\Z/p^k)$ for primes $p$ and $k\geq 1$.  
    \item[Step 2] Construct all finite-level representations $\rho$ of dimension $r$ by considering direct sums of tensor products of representations from Step 1.
    \item[Step 3] Construct possible modular data $(S,T)$ of rank $r$ by studying matrices of the form $S=U^{-1}\rho(\mfs) U$ where $U$ is orthogonal \cite[Theorem 3.4]{NRWW} and commutes with $\rho(\mft)$ for each $\rho$ from Step 2.
    \item[Step 4] Use the numerous constraints on modular data to eliminate as many pairs $(S,T)$ as possible.
    \item[Step 5] For each remaining pair $(S,T)$ find a modular category $\CC$ with this modular data.
\end{enumerate}
In \cite{NRWW} this approach was successfully applied to classify rank $6$ modular categories up to modular data.\footnote{In principle there could be inequivalent categories with the same modular data, but they would of course have the same fusion rules so the ambiguity is modest.} The number of cases to be considered proliferates rapidly, which requires significant assistance from computational software.

Super modular categories are slight generalizations of modular categories. Modular categories $\CC$ are non-degenerate in the sense that the symmetric center $\operatorname{Sym}(\CC)$ is the trivial category $\operatorname{Vec}$, while super-modular categories $\CC$ are \textbf{slightly degenerate} \cite{DNO}: $\operatorname{Sym}(\CC)$ is equivalent to the braided fusion category $\operatorname{sVec}$ of super-vector spaces. A technical, but inessential, additional assumption is that a super-modular category should also be unitary.  While super-modular categories have $S$ and $T$ matrices, they do not immediately yield a representation of a group, since $S$ will be degenerate.  However, after an appropriate reduction one obtains a representation of the index 3 subgroup $\Gamma_\theta\subset \operatorname{SL}(2,\Z)$ generated by $\mfs$ and $\mft^2$.  The idea is that both $S$ and $T^2$ have a well-defined tensor-decomposition into $S=\hat{S}\otimes  \frac{1}{\sqrt{2}}\begin{pmatrix}
    1 & 1\\1&1
\end{pmatrix}$ and $T^2=\hat{T}^2\otimes \begin{pmatrix}
    1 &0\\ 0&1
\end{pmatrix}$ where $\hat{S}$ and $\hat{T}^2$ give a projective representation of $\Gamma_\theta$. The pair $(\hat{S},\hat{T}^2)$ is called the \textbf{super-modular data} of $\CC$ -- note that $\hat T$ is only defined up to sign choices.

  A natural problem is to extend the above-described approach for classifying low rank modular categories to super-modular categories.  The crucial Step 1 is justified by the recently proved Minimal Modular Extension (MME) Theorem of \cite{DaveandTheo} and \cite{BRWZ} which together show that the representations of $\Gamma_\theta$ coming from super-modular categories factor over the finite group $\Gamma_\theta\cap \operatorname{SL}(2,\Z/N)$ for some $N$.  Then one can prove appropriate modifications of the steps above for the super-modular setting.  As a more modest goal one might hope to produce new super-modular data to aid in the classification.

  Under certain assumptions, Steps 1-4 were used in \cite{cho2022classification} to carry out a partial classification of super-modular data of rank $8$ and rank $10$.  The expectation is that the rank $8$ classification is complete, as it coincides with the partial classification in \cite{PPRZ}.  The authors of \cite{cho2022classification} produced 2 families of super-modular data $(\hat{S},\hat{T}^2)$ of rank 10 that were not known to have a realization, see (\ref{SMDS1}) and (\ref{SMDS2}).

  The main motivation for this article is to realize the super-modular data (\ref{SMDS1}) and (\ref{SMDS2}) by finding super-modular categories  with these super-modular data.  To do this we must find modular categories containing a fermion $f$ (i.e. \textbf{spin} modular categories \cite{16fold})  so that the centralizer $C_{\CC}(\langle f\rangle)$ of $f$ (also denoted by $\langle f\rangle^\prime$) is a super-modular category with the  given data.  Here by a \textbf{fermion} we mean an object $f$ with $f\otimes f\cong \one$ that has self-braiding $c_{f,f}=-\operatorname{Id}_{f\otimes f}$.  If such a spin modular category exists there is a 16-fold ambiguity, since there are precisely 16 minimal modular extensions of any given super-modular category \cite{16fold,LanKongWen}. Our main insight is that the super-modular data in \cite{cho2022classification} bears some similarity with modular data found in \cite{grossman2020infinite}, which is conjecturally the modular data associated with the Drinfeld centers of near-group fusion categories \cite{Siehler} (see Section \ref{NGcat}). In particular, we find that near-group categories associated with the groups $\Z/6$ and $\Z/4\times \Z/2$ yield such categories, after taking their Drinfeld centers, condensing a boson in the second case, and discarding pointed modular factors. The work of \cite{EvansGannon} and \cite{izumi2001structure} lay the groundwork for our approach, with the main difficulty being producing the modular data of these Drinfeld centers -- this involves solving a large system of non-linear equations.  We have the following:
  \begin{theorem*}
      \begin{enumerate}
          \item[(a)] Let $\mathcal C$ be the Drinfeld center of a near-group category of type $\mathbb Z/6+6$. Then $\mathcal C\cong \mathcal D\boxtimes \mathcal C(\mathbb Z/3, q)$, where $\mathcal D$ is a spin modular category, and $q$ is its associated quadratic form restricted to $\mathbb Z/3$. Moreover, the Müger centralizer of the fermion $f$ in $\mathcal D$ is super-modular and either itself or one of its Galois conjugates has the same super-modular data as in (\ref{SMDS1}). 
          \item[(b)] Let $\mathcal{C}$ be the Drinfeld center of a near-group category of type $\mathbb Z/2\times \mathbb Z/4 +8$. Then $\left[\mathcal{C}_{\mathbb \Z/2}\right]_0\cong \mathcal{D}\boxtimes \mathcal{C}(\mathbb Z/2, q)$, where $\mathcal{D}$ is a spin modular category and $q$ is the associated quadratic form restricted to $\mathbb Z/2$. Moreover, the Müger centralizer of the fermion $f$ in $\mathcal D$ is super-modular and either itself or one of its Galois conjugates has the same super-modular data as in (\ref{SMDS2}). 
      \end{enumerate}
  \end{theorem*}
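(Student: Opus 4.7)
The plan is to realize each target super-modular datum by computing the modular data of a specific Drinfeld center, isolating a spin modular factor, and reading off the Müger centralizer of the fermion. I will run the argument in parallel for cases (a) and (b), with an additional boson-condensation step required in (b).

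First I would invoke the classification of Izumi together with the explicit construction of Evans--Gannon to produce the near-group fusion category $\mathcal{N}$ of each prescribed type, along with concrete $F$-symbols and the action of the grading group on the non-invertible simple. Next, using the tube algebra picture (equivalently the half-braiding formalism), I would parametrize the simples of $Z(\mathcal{N})$ and write down the polynomial system that determines their $S$- and $T$-matrix entries. Solving this system and packaging the output into closed form is the main obstacle: the system is large and nonlinear, so I would exploit the grading of $\mathcal{N}$ by its group of invertibles, the Galois symmetries on the solution set, and Frobenius--Schur indicator formulas to cut down to a tractable set of variables.

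Once the modular data of $Z(\mathcal{N})$ is in hand, I would identify its pointed subcategory of invertibles. In case (a), this subcategory is equivalent to a modular $\mathcal{C}(\mathbb{Z}/3, q)$ (the quadratic form being read off from $T$), and Müger's decomposition theorem provides a Deligne splitting $\mathcal{C} \cong \mathcal{D} \boxtimes \mathcal{C}(\mathbb{Z}/3, q)$. In case (b), the invertibles form $\mathbb{Z}/2 \times \mathbb{Z}/4$ and contain a boson $b$; forming the category of local modules for the algebra $\one \oplus b$ yields $[\mathcal{C}_{\mathbb{Z}/2}]_0$, after which the residual $\mathbb{Z}/2$ pointed factor splits off as $\mathcal{C}(\mathbb{Z}/2, q)$, leaving a spin modular category $\mathcal{D}$.

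Within $\mathcal{D}$ I would then locate the fermion $f$ (the unique invertible simple with $f \otimes f \cong \one$ and $\theta_f = -1$) and compute $C_{\mathcal{D}}(\langle f \rangle)$ from the $S$-matrix of $\mathcal{D}$; its slightly degenerate $S$ and $T^2$ matrices admit the tensor factorizations recalled in the introduction, producing the candidate super-modular pair $(\hat S, \hat T^2)$. Finally, because the equations defining the near-group centers admit several Galois-conjugate solutions within a common cyclotomic field, I expect to match $(\hat S, \hat T^2)$ with (\ref{SMDS1}) or (\ref{SMDS2}) only after applying a Galois action together with a compatible relabeling of simples. This last identification is a routine comparison once the labeling is fixed, but tracking the sign ambiguity in $\hat T$ requires some care.
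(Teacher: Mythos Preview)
Your outline matches the paper's approach closely: the paper likewise fixes one representative near-group category in each family (denoted $J_6^1$ and $J_{(2,4)}^1$), solves Izumi's system (\ref{eq:half1})--(\ref{eq:half4}) for the triples $(\xi,\tau,\omega)$, reads off the modular data of the center from (\ref{Tmatrix-center})--(\ref{Smatrix-S44}), and then proceeds exactly as you describe---splitting off the pointed modular factor via M\"uger, locating the fermion, and extracting $(\hat S,\hat T^2)$.

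There is one point in case (b) where your sketch is too quick. After condensing the boson $b=X_{(0,2)}$, several simples of $\langle b\rangle'$ are \emph{fixed} by $b\otimes-$ and therefore split into pairs under the de-equivariantization functor $F$. The $S$-matrix of $[\CC_{\Z/2}]_0$ (and hence of $\DD$) is \emph{not} fully determined by the $S$-matrix of $\CC$: for a type~I object $X$ and a type~II object $Y$ with $F(Y)=\beta_1+\beta_2$, only the sum $\tS_{F(X),\beta_1}+\tS_{F(X),\beta_2}$ is fixed (this is the paper's Theorem~\ref{thm:s-matrixcondense}), and the individual entries $\tS_{\beta_i,\beta_j}$ are not directly accessible at all. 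The paper resolves this by writing the $5\times 5$ matrix $\hat S$ with unknowns in the rows corresponding to the split $\chi_6^3$-dimensional objects, then using orthogonality of $\hat S$ together with the balancing equation and the naive Verlinde formula to pin down those unknowns---in particular ruling out the self-dual possibility because it forces negative fusion coefficients. Your proposal should acknowledge this extra step; otherwise the computation of $\hat S$ in (b) is incomplete.
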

  We remark that for case $(b)$ we must first \emph{condense a boson}, i.e. take the $\Z/2$-de-equivariantization with respect to the symmetric Tannakian category $\langle b\rangle$ where $b$ is a \textbf{boson}: an object $b$ so that $b\otimes b\cong \one$ and the self-braiding satisfies $c_{b,b}=\operatorname{Id}_{b\otimes b}$.

  Along the way we noticed that our approach is quite general: we can often construct spin and hence super-modular categories from Drinfeld centers of near-group categories (for groups of even order).  We illustrate this with some examples.

  \textbf{Acknowledgments} E.R. and H.S. were partially supported by NSF grants DMS-2000331 and DMS-2205962.  The authors thank T. Gannon, A. Schopieray, Y. Wang, A. Bagheri and P. Gustafson for enlightening conversations.

\section{Preliminaries}
We shall need a few standard, but technical notions from the theory of braided fusion categories.
 Throughout this paper, we use the notation $\zeta_n:=e^{2 \pi i/n}$, $\mathbb{T}=\{x \in \mathbb{C} ;|x|=1\}$, and  $\chi_n^m=m+\sqrt{n}$.  We occasionally employ the shorthand $XY:=X\otimes Y$ for notational convenience, and also write $X\oplus Y$ as $X+Y$.  {We denote by $e(r)=exp(2\pi i r)$ for any rational number $r$.}

A braiding on a fusion category $\mathcal C$ is a natural isomorphism $c_{X, Y}: X \otimes Y  \rightarrow Y\otimes X$ satisfying the hexagon equations \cite{EGNO}. A braided fusion category is a fusion category equipped with a braiding. We call a braided fusion category $\mathcal C$ \textbf{symmetric} if $c_{Y, X}c_{X, Y}=\operatorname{Id}_{X \otimes Y}$ for all $X, Y \in \mathcal{C}$. Let $\mathcal C$ be a braided fusion category and $\mathcal{D}\subset \mathcal C$ a fusion subcategory.  The Müger centralizer $C_{\mathcal C}(\mathcal D)$ of $\mathcal D$ in $\mathcal C$ is the symmetric fusion subcategory of $\mathcal C$ generalized by $X\in \mathcal C$ such that $c_{Y, X}  c_{X, Y}=\operatorname{Id}_{X \otimes Y}$ for all $Y\in \mathcal D$. We will often use the shorthand notation $\DD^\prime=C_\CC(\DD)$ when no confusion can arise. The M\"uger center of $\mathcal C$ is the fusion subcategory $C_{\mathcal C}(\mathcal C)=\CC^\prime$, which is also called the symmetric center and sometimes denoted $\operatorname{Sym}(\mathcal C)$. A premodular category is a spherical braided fusion category (in other notation, a ribbon fusion category). A premodular category $\mathcal C$ is called \textbf{modular} if  $\operatorname{Sym}(\mathcal C)\cong \operatorname{Vec}$, that is, $\operatorname{Sym}(\mathcal C)$ is equivalent to the category  of finite-dimensional vector spaces over $\mathbb C$. For a premodular category $\CC$,  the $\tS$ matrix is defined by $\tS_{X,Y}=\operatorname{Tr}(c_{Y,X^*}c_{X^*,Y})$ and the $T$ matrix by $T_{X,Y}=\delta_{X,Y}\theta_X$, where $\theta_X$ is the (scalar form of the) ribbon twist. The $S$-matrix is a normalized version of $\tS$, namely $S:=\frac{\tS}{\sqrt{\dim(\CC)}}$.  An alternative definition of a modular category is a premodular category whose $S$ matrix is non-degenerate. A \textbf{super-modular} category $\mathcal B$ is a premodular category with $\operatorname{Sym}(\mathcal{B})$ equivalent to the category $\operatorname{ sVec}$ of super-vector spaces.
{In this case, the $S$ matrix is degenerate.  For any super-modular category it is more convenient to consider $(\hat{S},\hat{T}^2)$, where $S=\hat{S}\otimes \frac{1}{\sqrt{2}}\begin{pmatrix}
    1 & 1\\1&1
\end{pmatrix}$ and $T^2=\hat{T}^2\otimes \begin{pmatrix}
    1 &0\\ 0&1
\end{pmatrix}$.  Then, $\hat{S}$ is non-degenerate.}
A super-modular category $\CC$ is called \textbf{split} if there is a modular category $\DD$ such that $\CC\cong \DD\boxtimes \sVec$ as braided fusion categories.  Clearly non-split super-modular categories are of greatest interest.  
The following recent result is crucial:

\begin{theorem}\cite{DaveandTheo} Let $\CC$ be any super-modular category.  Then there exists a (psuedo-unitary) modular category $\DD$ such that $\CC\subset\DD$ and $\dim(\DD)=2\dim(\CC)$.
\end{theorem}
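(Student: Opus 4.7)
The plan is to construct a modular extension $\DD$ of $\CC$ with $\dim\DD = 2\dim\CC$, which by the dimension count amounts to adjoining essentially one new simple object (up to isomorphism) to $\CC$. Since $\operatorname{Sym}(\CC) \cong \sVec = \langle \one, f\rangle$ for a fermion $f$, any such $\DD$ must be faithfully $\Z/2$-graded with trivial component $\CC$, and the nontrivial component must be an invertible $\CC$-bimodule category inside $\DD$. Heuristically, one is seeking a ``square root'' of the fermion inside a larger braided category: an object $x$ with $x\otimes x \cong \one \oplus \cdots$ whose centralizer in $\DD$ is precisely $\CC$, together with compatible associator and braiding data.

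I would frame this via the theory of $G$-crossed braided extensions of Etingof--Nikshych--Ostrik, viewing the desired $\DD$ as a $\Z/2$-extension of $\CC$ where the nontrivial autoequivalence is the one induced by double braiding with $f$ (which is nontrivial exactly away from $\operatorname{Sym}(\CC)$ in the modular completion). The existence of such an extension is then governed by successive cohomological obstructions lying in $H^3$ and $H^4$ of $\Z/2$ with coefficients built from $\operatorname{Inv}(\CC)$ and $\mathbb C^\times$. The first step of my plan would be to verify that the relevant map $\Z/2 \to \underline{\operatorname{BrPic}}(\CC)$ into the Brauer--Picard $2$-group admits the required lifts; the second would be to prove that the pentagon/hexagon obstruction cocycle is cohomologically trivial. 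A structural hint that this ought to work is the special behavior of the fermion: $\theta_f = -1$ and $c_{f,f} = -\operatorname{Id}$ should force cancellations in the obstruction cocycles analogous to what happens in the Ising extension of $\sVec$.

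The main obstacle, and the reason this theorem resisted proof for many years, is precisely the vanishing of the top obstruction. My algebraic plan stalls there: there is no visible reason from the fusion/braiding axioms alone for the obstruction to vanish, and direct cocycle manipulations appear extremely delicate in the absence of more information. The proof in \cite{DaveandTheo} sidesteps this by appealing to $(3+1)$-dimensional spin topological field theory and the classification of invertible fermionic topological phases, obtaining the $\Z/16$-torsor of minimal modular extensions as a byproduct of a cobordism-hypothesis-style argument. A purely algebraic proof along the lines above would require either a clever uniform construction (for instance, an explicit algebra $A \in \CC \boxtimes \CC^{\mathrm{rev}}$ whose local modules realize $\DD$) or a subtle cohomological vanishing argument; within the elementary framework I have sketched, I do not see how to produce either directly, so the last paragraph is where all the real work lives.
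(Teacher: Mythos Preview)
The paper does not prove this statement at all: it is quoted as a black-box result from \cite{DaveandTheo} (Johnson-Freyd's resolution of the minimal modular extension conjecture), so there is no in-paper proof to compare your attempt against.

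As for your proposal itself, it is not a proof but an honest outline of why the problem is hard, and your self-assessment is accurate. The ENO $G$-crossed extension framework is the natural algebraic starting point, and the obstruction you identify in $H^4(\Z/2;\mathbb{C}^\times)$ is exactly where direct attacks stalled for years. Your final paragraph correctly characterizes the situation: the proof in \cite{DaveandTheo} does not proceed by showing this obstruction vanishes via cocycle manipulation, but rather embeds the question into the classification of invertible $(3{+}1)$d topological field theories and uses the structure of the Witt group and higher-categorical condensation to produce the extension. So your sketch has a genuine gap at the crucial step, you have named it yourself, and the actual argument fills it by entirely different means than the ones you propose.
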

As the minimal dimension of a (pseudo-unitary) modular category containing a super-modular category $\CC$ is $2\dim(\CC)$, such a category is called a \textbf{minimal modular extension} of $\CC$. A modular category $\DD$ with a fermion $f$ is called a spin modular category.  In this case $\DD=\DD_0\oplus\DD_1$ is $\Z/2$-graded, where the trivial component $\DD_0=C_{\DD}(\langle f\rangle)$. By results of \cite{16fold,LanKongWen} there are exactly $16$ minimal modular extensions of any super-modular category. 
Clearly if $\CC=\DD\boxtimes \sVec$ is a split super-modular category then $\DD\boxtimes \KK$ is a minimal modular extension where $\KK$ is one of the $16$ minimal modular extensions of $\sVec$ \cite{kitaev16fold}. Thus a super-modular category  $\CC$ is split if and only if any (hence, every) minimal modular extension factors in this way.

{We recall some results and notation from  \cite[Section 8.4]{EGNO}. A braided fusion category is \textbf{pointed} if every simple object is invertible.  In a pointed braided fusion category, the isomorphism classes of simple objects form a finite abelian group. 
For a finite abelian group $G$,  a \textbf{quadratic form} on $G$ is a map $q:G\rightarrow \mathbb{F}^\times$ such that $q(g)=q(-g)$ and the map $\langle\;,\;\rangle:G\times G\rightarrow\mathbb{F}^\times$ with $\langle g,h\rangle=\frac{q(g+h)}{q(g)q(h)}$ is a symmetric bicharacter. 
Let $G$ be a finite abelian group and $q:G\rightarrow \mathbb{F}^\times$ a quadratic form on $G$.  The pair $(G,q)$ is called a \textbf{pre-metric group}.
Consider the pointed braided fusion category with (isomorphism classes of) simple objects labeled by $g\in G$ and braiding $c_{g,h}$.  We can define a quadratic form $q:G\rightarrow\mathbb{F}^\times$ on $G$ by sending $g$ to $c_{g,g}\in\mathbb{F}^\times$.  Then, $(G,q)$ is a pre-metric group.  In fact, up to braided equivalence, for each pre-metric group $(G,q)$, there is a unique pointed braided fusion category, denoted by $\mathcal{C}(G,q)$.
By equipping $\mathcal{C}(G,q)$ with the spherical structure $\theta_g=q(g)$, we get a premodular category with a symmetric $S$ matrix defined by $S_{g,h}=\frac{1}{\sqrt{|G|}}\overline{\langle g,h\rangle}$ and a diagonal $T$ matrix defined by $T_{g,g}=q(g)$ (we use the conventions of \cite{grossman2020infinite}).
If $\langle\;,\;\rangle$ is a non-degenerate bicharacter, then $q$ is a \textbf{non-degenerate} quadratic form.  In this case, the pair $(G,q)$ is called a \textbf{metric group}.  The corresponding pointed category for a metric group, $\mathcal{C}(G,q)$, is then a modular category with $S$ and $T$ as its modular data. }

\subsection{$G$-de-equivariantization and Boson condensation} \label{subsec:anyoncondensation}

Suppose that $\CC$ is a braided fusion category with a subcategory equivalent to $\Rep(G)$.  In \cite{Bruguieres,muger2000galois,dgno}, three related concepts are described:  modularization, modules over an algebra object and de-equivariantization.  The $G$-de-equivariantization $\CC_G$ of $\CC$ is $G$-graded, and the trivial component $[\CC_G]_0$ is again braided.  
Boson condensation is the term used in the physical literature for the process $\CC\rightsquigarrow [\CC_G]_0$. This has a non-unique reverse process known as gauging \cite{CGPW}. There is an alternative description of boson condensation in terms of algebra objects that is sometimes more useful: the object $A=\operatorname{Fun}(G)\in\Rep(G)\subset\CC$ has the structure of an algebra object in $\CC$.  The category of $A$-modules in $\CC$, denoted $\CC_A$ is equivalent to $\CC_G$, and the category of so-called \emph{local} $A$-modules $\CC_A^0$ coincides with $[\CC_G]_0$.  
If $\CC$ is modular then so is $[\CC_G]_0$. Moreover, in this case $\operatorname{Sym}(\Rep(G)^\prime)=\Rep(G)$ and $[\CC_G]_0$ is equivalent to $(\Rep(G)^\prime)_G$: the $G$-de-equivariantization of the centralizer of $\Rep(G)$ inside $\CC$. This is called the \textbf{modularization} of $\Rep(G)^\prime$ \cite{Bruguieres}.  We remark that many algebra objects not of the form $\operatorname{Fun}(G)$ exist and leads to a more general construction.

Here we describe a practical computational approach to boson condensation in the special case of $G=\Z/2$ which is all we will need.  The non-trivial simple object in $\Rep(\Z/2)$ is a boson, which we denote $b$.  The corresponding algebra $A=\one+b$ is an object. In this case for any simple object $X$ we have either $bX\cong X$ or $bX\cong Y$ and $bY\cong X$ for some simple  object $Y\not\cong X$.   Let $F$ be the $\Z/2$-de-equivariantization functor restricted to the subcategory $\langle b\rangle ^\prime$ (i.e. the centralizer of the boson $b$) so that we have $F:\langle b\rangle ^\prime\rightarrow \CC_A^0$.  For an $X\in \langle b\rangle ^\prime$, if $bX\cong Y\not\cong X$ then $F(X)\cong F(Y)$ is a simple object in $\CC_A^0$ with dimension $\dim(X)$.  We can denote the simple for $F(X)\cong F(Y)$ by $\alpha$, and call it type I.  If $bX\cong X$ then $F(X)\cong \alpha_1\oplus \alpha_2$ where $\alpha_i$ are simple objects in $\CC_A^0$ with dimension $\dim(X)/2$.  This is called type II.  For $X\in \langle b\rangle ^\prime$ we have that $\theta_{F(X)}=\theta_X$.

We would like to compute the $\tS$-matrix entries of $\CC_A^0$ in the special case that $A=\one+ b$ for a boson $b$.  More general results are known, see \cite{BagheriThesis}.   For our purposes the following somewhat \emph{ad hoc} approach will suffice.

Recall that the balancing equation, for simple objects $X,Y$ is
\begin{equation}\label{balance}
    \theta_X\theta_Y\tS_{X,Y}=\sum_{Z\in\Irr(\CC)} N_{X^*,Y}^Z\theta_Zd_Z
\end{equation}
where the sum is over simple objects $Z$, $N_{X^*,Y}^Z=\dim\Hom(X^*\otimes Y,Z)$ and $d_Z$ is the categorical dimension of $Z$. From this
we can infer further information about the $\tS$-matrix of $\CC_A^0$.
\begin{theorem}\label{thm:s-matrixcondense}
    \begin{enumerate} Suppose $b\in\CC$ is a boson and $F$ is the condensation functor $F:\langle b\rangle ^\prime\rightarrow \CC_A^0$.  Let $X,Y\in\langle b\rangle^\prime$.
        \item[(a)] Suppose that $F(X)=\alpha$ and $F(Y)=\beta$ are simple, i.e., type I simple objects.  Then $\tS_{\alpha,\beta}=\tS_{X,Y}$.
        \item[(b)] Suppose that $F(X)=\alpha$ is simple and $F(Y)\cong\beta_1+\beta_2$ with $\beta_1, \beta_2$ simple, i.e., $X$ is type I and $Y$ is type II.
        Then $\tS_{\alpha,\beta_1}+\tS_{\alpha,\beta_2}=\tS_{X,Y}$.
    \end{enumerate}
\end{theorem}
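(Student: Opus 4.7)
The plan is to apply the balancing equation \eqref{balance} inside $\CC_A^0$ to the relevant pair and to recognize the right-hand side as the right-hand side of \eqref{balance} in $\CC$ for the pair $(X,Y)$. The essential input is that $A$-induction is a monoidal functor on $\langle b\rangle^\prime$, so $F(X)^*\cong F(X^*)$ and $F(X^*\otimes Y)\cong F(X^*)\otimes F(Y)$ in $\CC_A^0$, together with the dictionary recalled just above the theorem: $\theta_{F(X)}=\theta_X$, and $F(X)$ is either simple of dimension $d_X$ (type I) or a sum of two simples each of dimension $d_X/2$ (type II).

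First I would record one auxiliary identity: for any $Z\in\Irr(\langle b\rangle^\prime)$, we have $\theta_{bZ}=\theta_Z$ and $d_{bZ}=d_Z$. The dimension statement is immediate; the twist statement follows by applying \eqref{balance} to the pair $(b,Z)$ and using $\tS_{b,Z}=d_b d_Z=d_Z$, which holds precisely because $Z$ centralizes $b$. Next I would decompose $X^*\otimes Y=\sum_{Z\in\Irr(\langle b\rangle^\prime)} N_{X^*,Y}^Z\, Z$ (the sum is supported on $\langle b\rangle^\prime$ since that subcategory is closed under tensor products) and apply $F$ to read off the fusion coefficients in $\CC_A^0$. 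A type I class $\gamma=F(Z)=F(bZ)$ occurs with multiplicity $N_{X^*,Y}^Z+N_{X^*,Y}^{bZ}$ in $\alpha^*\otimes\beta$ (or in $\alpha^*\otimes(\beta_1+\beta_2)$), while each of the two components $\gamma_Z^1,\gamma_Z^2$ of $F(Z)$ for type II $Z$ occurs with multiplicity $N_{X^*,Y}^Z$.

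Substituting these multiplicities and the twist/dimension data into the right-hand side of \eqref{balance} in $\CC_A^0$, and grouping, the sum collapses to
\[
\sum_{\gamma\in\Irr(\CC_A^0)} N_{\alpha^*,\beta}^\gamma\,\theta_\gamma d_\gamma \;=\; \sum_{Z\in\Irr(\langle b\rangle^\prime)} N_{X^*,Y}^Z\,\theta_Z d_Z,
\]
since type I pairs unpair into two equal contributions $N_{X^*,Y}^Z\theta_Z d_Z$ and $N_{X^*,Y}^{bZ}\theta_{bZ}d_{bZ}$, and each type II class contributes $N_{X^*,Y}^Z\theta_Z(d_Z/2+d_Z/2)$. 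The right-hand side is exactly the $\CC$-side of \eqref{balance} for $(X,Y)$, namely $\theta_X\theta_Y\tS_{X,Y}$. Dividing by $\theta_\alpha\theta_\beta=\theta_X\theta_Y$ yields (a); in case (b) one first sums the $\CC_A^0$ balancing identities for $(\alpha,\beta_1)$ and $(\alpha,\beta_2)$, using $\theta_{\beta_1}=\theta_{\beta_2}=\theta_Y$, and then divides by $\theta_\alpha\theta_Y$. The main care needed is the bookkeeping in the grouping step, reconciling summation over $\Irr(\CC_A^0)$ with summation over $\Irr(\langle b\rangle^\prime)$; monoidality of $F$ together with the invariance of $\theta_Z$ and $d_Z$ under tensoring by $b$ make this matching consistent.
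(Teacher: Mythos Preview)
Your proof is correct and follows essentially the same route as the paper: apply the balancing equation \eqref{balance} in $\CC_A^0$, convert the fusion coefficients there into fusion coefficients in $\langle b\rangle'$ via the behavior of $F$, and recognize the resulting sum as the $\CC$-side of \eqref{balance} for $(X,Y)$. The only cosmetic difference is that you read off the multiplicities by applying the monoidal functor $F$ to the decomposition of $X^*\otimes Y$, whereas the paper invokes the adjunction $\Hom(F(X)\otimes F(Y),F(Z))\cong\Hom(X\otimes Y,Z\otimes(\one+b))$ to obtain the same identities; your phrasing in fact gives the slightly sharper statement $N_{\alpha^*,\beta}^{\gamma_1}=N_{\alpha^*,\beta}^{\gamma_2}=N_{X^*,Y}^Z$ for type~II $Z$, which the paper asserts but which the Hom formula alone only yields as a sum.
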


\begin{proof}

Recall \cite{Bruguieres,muger2000galois} that $$\Hom(F(X) \otimes F(Y),F(Z))=\Hom(X \otimes Y,Z\otimes (\one+ b)).$$  Suppose that $bX\not\cong X$ and $bY\not\cong Y$ are simple objects in $\langle b\rangle^\prime$, so that $F(X)=\alpha$ and $F(Y)=\beta$ are simple objects in $\CC^0_A$.  Consider the $\tS$-matrix entry $\tS_{\alpha,\beta}$.  By (\ref{balance}) we have:

$$\tS_{\alpha,\beta}=\frac{1}{\theta_{\alpha}\theta_{\beta}}\sum_{\gamma\in \Irr(\CC_A^0)}N_{(\alpha)^*,\beta}^\gamma d_\gamma\theta_\gamma .$$
There are two cases to consider:
\begin{enumerate}
    \item There is a simple $Z\in\CC$ such that $F(Z)=\gamma$.  In this case $bZ\not\cong Z$, and $d_\gamma=d_Z$ and $\theta_\gamma=\theta_Z$.
    \item There is a simple $Z\in\CC$ such that $F(Z)\cong \gamma_1+ \gamma_2$ with $\gamma_i$ simple.  In this case $bZ\cong Z$ and we have $\theta_{\gamma_1}=\theta_{\gamma_2}=\theta_Z$ and $d_{\gamma_1}=d_{\gamma_2}=\frac{d_Z}{2}$.
\end{enumerate}
In the first case we see that $N_{(\alpha)^*,\beta}^\gamma d_\gamma\theta_\gamma=N_{X^*,Y}^Zd_Z\theta_Z+N_{X^*,Y}^{bZ}d_{bZ}\theta_{bZ}$ with $Z\not\cong bZ$.  In the second case we find that $N_{(\alpha)^*,\beta}^{\gamma_1} =N_{(\alpha)^*,\beta}^{\gamma_2}=N_{X^*,Y}^Z$, so that $N_{X^*,Y}^Zd_Z\theta_Z=N_{(\alpha)^*,\beta}^{\gamma_1}d_{\gamma_1}\theta_{\gamma_1}+N_{(\alpha)^*,\beta}^{\gamma_2}d_{\gamma_2}\theta_{\gamma_2}$.  Thus we find that 
$$\tS_{\alpha,\beta}=\sum_{\gamma\in \Irr(\CC_A^0)}N_{(\alpha)^*,\beta}^\gamma d_\gamma \frac{\theta_\gamma}{\theta_{\alpha}\theta_{\beta}}=\sum_{Z\in\Irr(\CC)} N_{X^*,Y}^Zd_Z\frac{\theta_Z}{\theta_X\theta_Y}=\tS_{X,Y}$$
proving (a).

Now consider the case of simple objects $X,Y$ in $\langle b\rangle ^\prime$ such that $bX\not\cong X$ and $bY\cong Y$.  From this we have that $F(X)=\alpha$ and $F(Y)\cong\beta_1+ \beta_2$ where $\beta_i$ are non-isomorphic simple objects of dimension $\dim(\beta_i)=\frac{\dim(Y)}{2}$.    We argue similarly as above to see the following:

\begin{enumerate}
    \item If $\gamma=F(Z)$ comes from an object $Z\in\Irr(\CC)$ such that $bZ\not\cong Z$ we have: $$N_{X^*,Y}^Z+N_{X^*,Y}^{bZ}=N_{\alpha^*,\beta_1}^{\gamma}+N_{\alpha^*,\beta_2}^{\gamma},$$ while
    \item if $\gamma_1+ \gamma_2\cong F(Z)$ for $Z\in\Irr(\CC)$ we find that 

$$N_{\alpha^*,\beta_1}^{\gamma_1}+N_{\alpha^*,\beta_2}^{\gamma_1}+N_{\alpha^*,\beta_1}^{\gamma_2}+N_{\alpha^*,\beta_2}^{\gamma_2}=2N_{X^*,Y}^Z.$$
\end{enumerate}
Since $Z\in\Irr(\CC)$ is of type I if $F(Z)=\gamma$ is simple, i.e. if $ bZ\not\cong Z$, and type II if $F(Z)\cong\gamma_1+\gamma_2$, i.e. $bZ\cong Z$, we can partition $\Irr(\CC)=\Irr(\CC)_I\cup b\Irr(\CC)_I\cup \Irr(\CC)_{II}$ where we have chosen representatives of each $\Z/2$-orbit for objects of type I so that every type I object is in exactly one of $\Irr(\CC)_I$ or $b\Irr(\CC)_I$.   Recall that $\theta_{\beta_i}=\theta_Y$, $\theta_{\alpha}=\theta_X$, $\theta_\gamma=\theta_Z$, $d_\gamma=d_Z$, and $d_{\gamma_i}=\frac{d_Z}{2}$.
We compute:

\begin{eqnarray*}
    \tS_{\alpha,\beta_1}+\tS_{\alpha,\beta_2}&=&\sum_{\gamma\in \Irr(\CC_A^0)} (N_{\alpha^*,\beta_1}^{\gamma}+N_{\alpha^*,\beta_2}^{\gamma})\frac{d_Z\theta_Z}{\theta_X\theta_Y}=\\
   \sum_{Z\in\Irr(\CC)_I} (N_{X^*,Y}^{Z}+N_{X^*,Y}^{bZ})\frac{d_Z\theta_Z}{\theta_X\theta_Y}&+ &\sum_{\stackrel{Z\in\Irr(\CC)_{II}}{F(Z)=\gamma_1+\gamma_2}} (N_{\alpha^*,\beta_1}^{\gamma_1}+N_{\alpha^*,\beta_2}^{\gamma_1}+N_{\alpha^*,\beta_1}^{\gamma_2}+N_{\alpha^*,\beta_2}^{\gamma_2})\frac{d_Z\theta_Z}{2\theta_X\theta_Y}=\\
  && \sum_{Z\in\Irr(\CC)_I} (N_{X^*,Y}^{Z}+N_{X^*,Y}^{bZ})\frac{d_Z\theta_Z}{\theta_X\theta_Y}+ \sum_{Z\in\Irr(\CC)_{II}} (2N_{X^*,Y}^{Z})\frac{d_Z\theta_Z}{2\theta_X\theta_Y}=\tS_{X,Y}
\end{eqnarray*}
proving (b).
\end{proof}

\subsection{Near-group categories}\label{NGcat}

    Let $G$ be a finite group of order $n$ and $m$ a nonnegative integer. A \textbf{near-group category} of type $G+m$ is a fusion category with simple objects labeled by elements $g\in G$ and an extra simple object labeled by $\rho$ such that the fusion rules are generalized by the group operation in $G$, $g\rho=\rho g=\rho$ for all $g\in G$, and $ \rho^{\otimes2} = m\rho+\sum_{g\in G}g$. The Tambara-Yamagami categories are the near-group categories with $m = 0$, which are fully classified in \cite{TY98}.  It is known \cite[Theorem 2]{EvansGannon} that the only possible values of $m$ are $n$, $n-1$ or $m = kn$ for some nonnegative integer $k$. This paper focuses on the cases when $m=n$ since these are related to the modular data we aim to realize. 
    
\begin{theorem} \cite[Theorem 5.3]{izumi2001structure}, \cite[Corollary 5]{EvansGannon} Let $G$ be a finite abelian group of order $n$, $\langle \;,\;\rangle$ a non-degenerate symmetric bicharacter on $G$ and define $d=\dfrac{n+\sqrt{n^2+4n}}{2}$. Let  $c\in \mathbb T$, $a: G\to \mathbb T$, $b: G\to \mathbb C$ be such that 
\begin{equation}
     a(0)=1, \quad a(x)=a(-x), \quad a(x+y)\langle x, y\rangle=a(x)a(y),\quad \sum_{a\in G} a(x)=\sqrt{n}c^{-3},
\end{equation}

\begin{equation}
    b(0)=-\frac{1}{d}, \quad \sum_y \overline{\langle x, y\rangle} b(y)=\sqrt{n} c \overline{b(x)}, \quad a(x) b(-x)=\overline{b(x)},
\end{equation}
\begin{equation}
    \sum_x b(x + y) \overline{b(x)}=\delta_{y, 1}-\frac{1}{d}, \quad \sum_x b(x + y) b(x + z) \overline{b(x)}=\overline{\langle y, z\rangle} b(y) b(z)-\frac{c}{d \sqrt{n}}.
\end{equation}

Then $\langle\, ,\, \rangle$, $  c, a, b$ determine a near-group fusion category of type $G+n$. Two such categories $\mathcal{C}_1$ and $\mathcal{C}_2$ determined by $\langle\,,\,\rangle_1,c_1, a_1, b_1$ and $\langle\,,\,\rangle_2,c_2, a_2, b_2$  are equivalent as fusion categories if and only if there is $\phi \in \operatorname{Aut}(G)$ such that $\langle x, y\rangle_1=\langle\phi x, \phi y\rangle_2, a_1(x)=a_2(\phi x)$, $b_1(x)=b_2(\phi x)$ and $c_1=c_2$.
\end{theorem}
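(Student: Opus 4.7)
The plan is to parametrize the associativity constraints (F-symbols) of a near-group category of type $G+n$ with $G$ abelian, and reduce the pentagon axiom to the listed system of equations on $(\langle\,,\,\rangle, c, a, b)$. Write $\CC$ as the skeletal category whose simples are $G\sqcup\{\rho\}$ with the prescribed fusion rules. Because the invertible simples form a pointed subcategory $\CC_{\mathrm{pt}}\simeq \mathrm{Vec}_G^{\omega}$ for some $3$-cocycle $\omega$, and because the fusion rules $g\otimes \rho\cong\rho$ force the associators $(g\otimes h)\otimes \rho\to g\otimes(h\otimes \rho)$ and $\rho\otimes(g\otimes h)\to (\rho\otimes g)\otimes h$ to be governed by a pair of $1$-cochains on $G$, a standard cohomological gauge argument (essentially the one used in the Tambara--Yamagami setting) lets us trivialize $\omega$ and these mixed associators, leaving the bicharacter $\langle g,h\rangle$ as the residual datum encoding the $g\otimes \rho\otimes h$ associativity.

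Next I would isolate the genuinely new F-symbols, namely those involving two or three copies of $\rho$. Since $\dim\Hom(\rho,\rho\otimes\rho)=n$ and $\dim\Hom(g,\rho\otimes\rho)=1$ for each $g\in G$, the F-matrix block for the triple product $\rho\otimes\rho\otimes\rho$, with a fixed outgoing simple, has size $(n+1)\times(n+1)$. After using the residual gauge freedom on the $n$-dimensional multiplicity space $\Hom(\rho,\rho\otimes\rho)$ (a $\mathrm{GL}_n$ action) to diagonalize in the $G$-direction, I would introduce $c,a(x),b(x)$ as the normalized matrix entries: $c$ is a global normalization, $a(x)$ records the eigenvalues along the group-like channel, and $b(x)$ records the mixing of $\rho$ into itself. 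Unpacking the pentagon axiom applied to $\rho^{\otimes 4}$ then yields, channel by channel, the listed equations: the character-like identity $a(x+y)\langle x,y\rangle=a(x)a(y)$ from a rank-one block, the Fourier-type orthogonality $\sum_x b(x+y)\overline{b(x)}=\delta_{y,1}-\tfrac{1}{d}$ from the $\rho\rho\to g\to\rho\rho$ channel, and the cubic identity $\sum_x b(x+y)b(x+z)\overline{b(x)}=\overline{\langle y,z\rangle}b(y)b(z)-\tfrac{c}{d\sqrt{n}}$ from the triple-$\rho$ channel; the constant $d$ is forced to be $\tfrac{n+\sqrt{n^2+4n}}{2}$ by $\mathrm{FPdim}(\rho)^2=n\,\mathrm{FPdim}(\rho)+n$.

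For the classification up to equivalence, I would argue that any monoidal equivalence $F\colon\CC_1\to\CC_2$ sends $\rho_1$ to $\rho_2$ (it is the unique non-invertible simple) and permutes the invertible simples by a group isomorphism $\phi\in\operatorname{Aut}(G)$ that preserves the bicharacter. After pulling back by $\phi$, the remaining freedom is that of a monoidal natural isomorphism of the identity functor, which amounts to rescaling each simple by a scalar in $\mathbb{T}$. The normalizations $a(0)=1$ and $b(0)=-1/d$ exhaust this scalar freedom, leaving precisely the substitution $a_1(x)=a_2(\phi x)$, $b_1(x)=b_2(\phi x)$, $c_1=c_2$.

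The main obstacle is the parametrization and reduction step: the pentagon equations for $\rho^{\otimes 4}$ form a large nonlinear system, and choosing the right gauge so that the surviving equations take the compact form above is delicate. The classical route, due to Izumi, circumvents some of this combinatorics by building $\CC$ operator-algebraically from a self-conjugate endomorphism of a Cuntz algebra and extracting $(c,a,b)$ from its intertwiners; the Evans--Gannon route solves the pentagon directly at the level of F-symbols. Either way, the \emph{existence} direction (producing an honest fusion category from any solution of the stated system) carries all the weight, while necessity of the equations and the equivalence classification are essentially bookkeeping once the parametrization is fixed.
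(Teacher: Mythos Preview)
This theorem is not proved in the paper at all: it is quoted verbatim from Izumi \cite{izumi2001structure} and Evans--Gannon \cite{EvansGannon} as background, and the authors provide no argument of their own. There is therefore no ``paper's proof'' to compare your proposal against.

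That said, your outline is a fair high-level summary of how the cited references establish the result, and you correctly identify that the two sources take distinct routes: Izumi's original proof is operator-algebraic, constructing the category from a self-conjugate endomorphism of a Cuntz algebra $\mathcal{O}_{n+1}$ and reading off $(c,a,b)$ from the intertwiner relations, while Evans--Gannon work directly with the $6j$-symbols and the pentagon. Your sketch is closer to the latter. As an outline it is reasonable, but as written it remains a plan rather than a proof: the substantive content lies precisely in the ``delicate'' step you flag, namely exhausting the gauge freedom on the $n$-dimensional multiplicity space $\Hom(\rho,\rho^{\otimes 2})$ and then verifying that the full pentagon for $\rho^{\otimes 4}$ reduces \emph{exactly} to the displayed system with no further independent constraints. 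Neither direction (sufficiency of the equations for existence, nor the claim that the residual gauge freedom is only $\operatorname{Aut}(G)$) is actually carried out in your text.
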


\subsection{Centers of near-group categories when $m=n$}\label{Centers} 
The modular data for the center of a near-group of type $G+n$, for $|G|=n$, is given as follows \cite{izumi2001structure}.  First, we need to find all functions $\xi: G \rightarrow \mathbb{T}$ and values $\tau \in G, \omega \in \mathbb{T}$ which satisfy 
\begin{equation}\label{eq:half1}
    \sum_g \xi(g)=\sqrt{n} \omega^2 a(\tau) c^3-n d^{-1}
\end{equation}
\begin{equation}\label{eq:half2}
    \bar{c} \sum_k b(g+k) \xi(k)=\omega^2 c^3 a(\tau) \overline{\xi(g+\tau)}-\sqrt{n} d^{-1}
\end{equation}
\begin{equation}\label{eq:half3}
    \xi(\tau-g)=\omega c^4 a(g) a(\tau-g) \overline{\xi(g)}
\end{equation}
\begin{equation}\label{eq:half4}
    \sum_k \xi(k) b(k-g) b(k-h)=c^{-2} b(g+h-\tau) \xi(g) \xi(h) \overline{a(g-h)}-c^2 d^{-1}
\end{equation}

There are $n(n+3)/2$ triples $(\xi_i, \tau_i, \omega_i)$ that satisfy the above equations.
The corresponding center has rank $n(n+3)$ with the following 4 subsets of simple objects:
\begin{enumerate}
    \item $\mathfrak{a}_g, g \in G$, $\dim(\mfa_g)=1$;
    \item $\mathfrak{b}_h, h \in G$, $\dim(\mfb_h)=d+1$;
    \item $\mathfrak{c}_{l, k}=\mathfrak{c}_{k, l}, l, k \in G, l \neq k$, $\dim(\mfc_{k,l})=d+2$
    \item $\mathfrak{d}_j$, where $j$ corresponds to a triple $\left(\xi_j, \tau_j, \omega_j\right)$, $\dim(\mfd_j)=d$.
\end{enumerate}
The $T$ and $S$ matrices are given by the following block form
\begin{equation}\label{Tmatrix-center}
    T=\operatorname{diag}\left[\langle g, g\rangle,\langle h, h\rangle,\langle k, l\rangle, \omega_j\right]
\end{equation}

\begin{equation}\label{Smatrix-center}
    S=\\\frac{1}{\lambda}\left[\begin{array}{cccc}\left\langle g, g^{\prime}\right\rangle^{-2} & ( d+1)\left\langle g, h^{\prime}\right\rangle^{-2} & ( d+2) \overline{\left\langle g, k^{\prime}+l^{\prime}\right\rangle} &  d\left\langle g, \tau_{j^{\prime}}\right\rangle \\( d+1)\left\langle h, g^{\prime}\right\rangle^{-2} & \left\langle h, h^{\prime}\right\rangle^{-2} & ( d+2) \overline{\left\langle h, k^{\prime}+l^{\prime}\right\rangle} & - d\left\langle h, \tau_{j^{\prime}}\right\rangle \\( d+2) \overline{\left\langle k+l, g^{\prime}\right\rangle} & ( d+2) \overline{\left\langle k+l, h^{\prime}\right\rangle} & S_{(k, l),\left(k^{\prime}, l^{\prime}\right)} & \mathbf 0 \\ d\left\langle\tau_j, g^{\prime}\right\rangle & - d\left\langle\tau_j, h^{\prime}\right\rangle &\mathbf 0 & S_{j, j^{\prime}}\end{array}\right],
\end{equation}
where\footnote{There is a slight difference between our eqn. (\ref{Smatrix-S44}) and that of \cite[eqn. (4.58)]{EvansGannon}: the $\tau_{j}$ and $\tau_{j^\prime}$ are switched. This yielded consistent modular data in our setting.}
\begin{equation}\label{Smatrix-S44}
    \begin{aligned} & S_{j, j^{\prime}}=\omega_j \omega_{j^{\prime}} \sum_{g \in G}\left\langle\tau_j+\tau_{j^{\prime}}+g, g\right\rangle \\ & + d \omega_j \omega_{j^{\prime}}c^6 a\left(\tau_j\right) a\left(\tau_{j^{\prime}}\right) n^{-1} \sum_{g, h \in G} \overline{\xi_j(g) \xi_{j^{\prime}}(h)\left\langle\tau_{j^\prime}-\tau_{j}+h-g, h-g\right\rangle}, \end{aligned}
\end{equation}
and 
\begin{equation}\label{Smatrix-S33}
    S_{(k, l),\left(k^{\prime}, l^{\prime}\right)}=(d+2)\left(\overline{\left\langle k, k^{\prime}\right\rangle\left\langle l, l^{\prime}\right\rangle}+\overline{\left\langle k, l^{\prime}\right\rangle\left\langle l, k^{\prime}\right\rangle}\right) .
\end{equation}

\begin{remark}  As it may be useful to other researchers, here is our approach to finding solutions $(\xi,\tau, \omega)$ to equations (\ref{eq:half1})-(\ref{eq:half4}).  We fix $\omega$ a root of unity and $\tau\in G$.
Notice that equation (\ref{eq:half2}) can be rephrased to simplify solving for $\xi$: equation (\ref{eq:half3}) implies that 
$$\overline{\xi(g+\tau)}=\xi(-g)\overline{\omega c^4 a(g+\tau) a(g)}.$$
Substituting this into equation (\ref{eq:half2}), we have 
\begin{equation}\label{2.5mod}
\omega a(\tau)\overline{a(g+\tau) a(g)} \xi(-g)-\sum_k b(g+k) \xi(k)=\frac{c \sqrt{n}}{d}.
\end{equation}

 Let $C=C(\omega,\tau)$ be the matrix indexed by $G$ with entries $C_{g,h}(\omega,\tau)=\omega a(\tau)\overline{a(g+\tau) a(g)} \delta_{g,-g}$.
 Let $B$ be the matrix such that $B_{g, k}= b(g+k), g, k\in G$. Then equation (\ref{2.5mod}) becomes the system:
 \begin{equation}\label{cbeqn}
     (C(\omega,\tau)-B)\Vec{\xi}=\Vec{z}
 \end{equation} where $\Vec{z}$ has all entries equal to $\frac{c \sqrt{n}}{d}$.  

 We consider all pairs $(\omega,\tau)$ where $\tau\in G$ and $\omega$ is a root of unity, of bounded degree \cite{Etingof02MRL,BNRW1}, and first solve the system (\ref{cbeqn}).  This gives an affine set consisting of vectors $\xi_h+\xi_p$ where $\xi_p$ is a particular solution and $\xi_h\in \operatorname{Null}(C(\omega,\tau)-B)$.  Depending on the dimension of $\operatorname{Null}(C(\omega,\tau)-B)$ we choose an appropriate number of equations (possibly 0) from (\ref{eq:half4}) to determine the free parameters and obtain the vector form of $\xi$.  We then test the corresponding triples $(\xi,\tau,\omega)$ on the remaining equations of (\ref{eq:half1})-(\ref{eq:half4}) to determine if the triple is a solution or not.  When we have found exactly $n(n+3)/2$ triples $(\xi,\tau,\omega)$ that satisfy all equations we stop and compute the $S$-matrix.
\end{remark}

\subsection{Leveraging the pointed part of near-group centers}
Suppose $\CC$ is a near-group category of type $G+n$ where $n=|G|$, corresponding to the non-degenerate symmetric bicharacter $\langle \;,\;\rangle.$  Using (\ref{Smatrix-center}) and (\ref{Tmatrix-center}) we can glean a wealth of information about the center $\mathcal{Z}(\CC)$, by examining the pointed part.

The objects $\mfa_g$ with $\mfa_g^{\otimes 2}=\one$, i.e., $2g=0$ as a group element, and $\langle g,g\rangle=1$ are bosons.
Generally, if $\mfa_g$ is an invertible object then we can determine its centralizer.  It is generated by:
\begin{enumerate}
    \item the invertibles $\mfa_h$ of dimension $1$ with $\langle g,h\rangle^{-2}=1$,
    \item the simple objects $\mfb_h$ of dimension $d+1$ with $\langle g,h\rangle^{-2}=1$,
    \item the objects $\mfc_{k,\ell}$ of dimension $d+2$ with $\langle g,k+\ell\rangle=1$, and
    \item  the objects $\mfd_j=(\xi_j,\tau_j,\omega_j)$  of dimension $d$ such that $\langle g,\tau_j\rangle=1$.
\end{enumerate}

Suppose that $|G|$ is even so that there is an element $g$ with $2g=0$.  The corresponding object $\mfa_g$ has $\mfa_g^{\otimes 2}=\one$ and is either a boson or a fermion, depending on the value of $\langle g,g\rangle\in\{1,-1\}$ (recall that $\langle g,h\rangle$ is a bicharacter, so that the value of $\langle g,h\rangle=\pm 1$).
Now if $b=\mfa_g$ is a boson we can determine a significant portion of the condensation by $\langle b\rangle\cong\Rep(\Z/2)$.  First note that $\langle g,h\rangle^{-2}=\langle 2g,h\rangle^{-1}=1$ since $2g=0$.  Thus all of the objects of $\mfa_h,\mfb_h$ are centralized by $b$.  Moreover, the objects $\mfb_h$ must have $b\otimes \mfb_h\not\cong \mfb_h$. If not, then under the condensation functor the image of $\mfb_h$ would be a sum of two simple objects of dimension $\frac{d+1}{2}$, which is not an algebraic integer: indeed $d+1$ is a unit in $\Q[d]$ so $\frac{d+1}{2}$ has norm $1/4$. On the other hand $\frac{d}{2}$ and $\frac{d+2}{2}$ are algebraic integers if and only if $4\mid n$. So, the objects $\mfd_j$ and $\mfc_{k,l}$ could be fixed under tensoring with the boson if $4\mid n$.

\begin{example}\label{z8example}

Now suppose that $G=\Z/m\times \Z/{2^s}$, where $m$ is odd and $s\geq 2$ (note that $G$ is cyclic).  Suppose that there is a near-group fusion category $\CC$ realizing the fusion rules as type $G+m2^s$.  If we write $G$ additively, the non-degenerate quadratic form $a$ on $G$ is given by $$a(x,y)=exp(\frac{ 2\varepsilon_1\pi ix^2}{m})exp(\frac{\varepsilon_2\pi i y^2}{2^{s}})$$ where $\varepsilon_1=\pm1$ and $\varepsilon_2\in\{\pm 1,\pm 5\}$.
Consider the Drinfeld center $\mathcal{Z}(\CC)$.  First notice that the $\CC(\Z/m,q)$ part of the pointed subcategory in $\mathcal{Z}(\CC)$ is modular since the $S$-matrix entries are $\langle (x_1,0),(x_2,0)\rangle^{-2}=\left(\frac{a(x_1+x_2,0)}{a(x_1,0)a(x_2,0)}\right)^{-2}$ which is non-degenerate.  So we may factor this out as it does not contribute anything useful. Thus we assume $m=0$, and consider $G=\Z/{2^s}$. The element $2^{s-1}$ has order $2$ and $\langle2^{s-1},2^{s-1}\rangle=1$ and so $\mfa_{2^{s-1}}=b$ is a boson. Since none of the invertible objects are fixed by tensoring with $b$ we find that the pointed part of $[\mathcal{Z}(\CC)_{\Z/2}]_0$ comes from the cyclic group $\Z/{2^{s-1}}$, with quadratic form $A(x)=\langle x,x\rangle=exp(\pi i \varepsilon_2 x^2/2^{s-1} )$, which is non-degenerate.  Thus, the pointed part of $[\mathcal{Z}(\CC)_{\Z/2}]_0$ is modular so that $[\mathcal{Z}(\CC)_{\Z/2}]_0\cong \CC(\Z/2^{s-1},Q)\boxtimes\DD$ where $\DD$ is a modular category with trivial pointed part.

In particular we see that no non-split super-modular categories are obtained from near-group categories associated with $\Z/2^s$ for $s\geq 2$. 
   
\end{example}

\begin{example}
    Now suppose that $G=\Z/2m$ with $m$ odd and there is a near-group category $\CC$ with fusion rules as type $G+2m$. In this case a similar computation reveals that $\mathcal{Z}(\CC)\cong \CC(\mathbb Z/m,q)\boxtimes \DD$ where $\DD$ is spin modular.  Moreover, $\DD$ does not contain a minimal modular extension of $\sVec$, and hence $\DD_0$ is non-split super-modular with simple objects of dimension $1$, $d=m+\sqrt{m^2+2m}$, $d+1$ and $d+2$.  We shall see this explicitly in the case of $m=3$ below.
\end{example}

\section{Realizing modular data}

In \cite{cho2022classification}, two families of unrealized super-modular data are constructed from the congruence representations of the $\Gamma_{\theta}$ group, as outlined in Section \ref{intro}. Below are two representatives of them, where  $\chi_n^m=m+\sqrt{n}$. 
\begin{equation}\label{SMDS1}
\hat{S} = \frac{1}{\sqrt{30 \chi_{15}^4}}
\begin{bmatrix}
1 & \chi_{15}^4 & \chi_{15}^5 & \chi_{15}^3 & \chi_{15}^3 \\
\chi_{15}^4 & 1 & \chi_{15}^5 & -\chi_{15}^3 & -\chi_{15}^3 \\
\chi_{15}^5 & \chi_{15}^5 & -\chi_{15}^5 & 0 & 0 \\
\chi_{15}^3 & -\chi_{15}^3 & 0 & \frac{1}{2} \chi_5^1 \chi_{15}^3 & -\frac{2 \sqrt{30 \chi_{15}^4}}{\chi_5^5} \\
\chi_{15}^3 & -\chi_{15}^3 & 0 & -\frac{2 \sqrt{30 \chi_{15}^4}}{\chi_5^5} & \frac{1}{2} \chi_5^1 \chi_{15}^3
\end{bmatrix},\qquad \hat{T}^2 = \diag[1, 1,  e^{2 i \pi / 3}, e^{4i\pi/5}, e^{-4i\pi/5}].
\end{equation}
\begin{equation}\label{SMDS2}
\hat{S} = \frac{1}{2 \sqrt{6 \chi_{24}^5}}
\begin{bmatrix}
1 & \chi_{24}^5 & \chi_6^3 & \chi_6^3 & \chi_{24}^4 \\
\chi_{24}^5 & 1 & \chi_6^3 & \chi_6^3 & -\chi_{24}^4 \\
\chi_6^3 & \chi_6^3 & -\chi_6^3-i \sqrt{6 \chi_{24}^5}&-\chi_6^3+i \sqrt{6 \chi_{24}^5} & 0 \\
\chi_6^3 & \chi_6^3 & -\chi_6^3+i \sqrt{6 \chi_{24}^5}&-\chi_6^3-i \sqrt{6 \chi_{24}^5} & 0 \\
\chi_{24}^4 & -\chi_{24}^4 & 0 & 0 & \chi_{24}^4
\end{bmatrix},\qquad \hat{T}^2 = \diag[1, 1, -1, -1, e^{2 i \pi / 3}].
\end{equation}

In \cite[Table 6]{cho2022classification} we find a family of $4$ different data related to (\ref{SMDS1}) and a pair of data related to (\ref{SMDS2}) with positive dimensions. They are related to each other by Galois automorphisms, so we will be satisfied with realizing one set of data for each family.

Observing that $\frac{6+\sqrt{6^2+24}}{2}=3+\sqrt{15}=\chi^3_{15}$  and $\frac{8+\sqrt{8^2+32}}{2}=4+\sqrt{24}=\chi_{24}^4$ it is reasonable to expect that this super-modular data is related to that of the centers of near-group fusion categories coming from groups of order $6$ and $8$.
We will show that this is indeed the case: for $G=\Z/6$ and $G=\Z/2\times\Z/4$.  We point out that by Example \ref{z8example} $G=\Z/8$ will not yield a non-split super modular category, whereas the near-group fusion rule associated with $G=(\Z/2)^3$ is not realizable by results of \cite{schoppreprint}.

\subsection{$G=\mathbb Z/6$} 
There are 4 inequivalent near-group fusion categories \cite{EvansGannon, Budthesis} of type $\mathbb Z/6+6$.  Their basic data is summarized in Table \ref{table:1}. The nondegenerate symmetric bicharacter  for a cyclic group $\mathbb Z/n$ is in the form of $\langle x, y\rangle=\exp (2 \pi i m x y / n)$, where $m\in \mathbb Z$ and $\gcd(m,n)=1$. When $n$ is even, the function $a$ is in the form of $a(x)=\varepsilon^x \exp \left(-\pi i m x^2 / n\right)$, where $\varepsilon=\pm 1$. By taking $m \in\mathbb Z/2n$ and coprime to $n$, we can eliminate the potential factor of $-1$.  This $m$ is recorded in the second column of Table \ref{table:1}. As $b(0)=-\frac{1}{d}$, $b(-x)=\overline{a(x) b(x)}$ and $b(x)=\frac{1}{\sqrt{n}} \exp (i j(x))$, where $j(x)\in [-\pi, \pi]$,  it suffices to list the values of $j(1),j(2)$ and $j(3)$ to determine $b(x)$.

\begin{table}[h]
\begin{tabular}{|l|c|l|l|}
\hline
\# & \multicolumn{1}{l|}{$m$} & \multicolumn{1}{c|}{$c$} &\qquad \qquad$ j(1), j(2), j(3) $ \\ \hline
$J_6^1$  & $5$                      & $\zeta_{24}^{5}$         &  $\,\,\,\,2.91503,-1.59091,\,\,\,\,2.35619$   \\ 
$\overline{J_6^1}$& $-5$                     & $\zeta_{24}^{-5}$        &  $-2.91503,\,\,\,\,1.59091,-2.35619$   \\ \hline
$J_6^2$  & $1$                      & $\zeta_{24}^{1}$         & $\,\,\,\,  2.95526,\,\, \,\,  0.0553542,-0.785398$    \\ 
$\overline{J_6^2}$  & $-1$                     & $\zeta_{24}^{-1}$        &  $-2.95526,-0.0553542,\,\,\,\,0.785398$   \\ \hline

\end{tabular}
\caption{The data for near-group categories of type $\mathbb Z/6+6$.}
\label{table:1}
\end{table}

We consider the near-group $J_6^1$ in Table \ref{table:1} and solve equations (\ref{eq:half1}) - (\ref{eq:half4}). The solutions  $(\omega_j, \tau_j, \xi_j)$ are listed in Table \ref{table:2}. The $\omega_j$'s are equal to $\zeta_{60}^k$ for some integer $k$, and we list the values for $k$ in the corresponding column. As $\xi_j(g)=\exp(i\theta_{j, g})$, we record the values $\theta_{j, g}$ in the column for $\xi$ with the order $g=0,\dots, 5\in \mathbb Z/6$.

\begin{table}[h]
\begin{tabular}{|l|l|l|l|}
\hline
\# & $\omega$ & $\tau$ & \multicolumn{1}{c|}{$\xi$}                                     \\ \hline
1  & $12$      & $0$    & $-3.03687,1.02812,-0.75497,1.67552,-1.12999,0.228519$       \\
2  & $18$     & $0$    & $0.418879,-0.476051,1.75517,1.98968,-3.0118,2.36101$ \\
3  & $42$     & $0$    & $1.67552,-1.12999,0.228519,-3.03687,1.02812,-0.75497$       \\
4  & $48$     & $0$    & $1.98968,-3.0118,2.36101,0.418879,-0.476051,1.75517$ \\ \hline
5  & $23$     & $1$    & $-2.58859,1.33196,1.6366,-0.961707,-0.29493,-0.798841$       \\
6  & $23$     & $1$    & $-0.961707,-0.29493,-0.798841,-2.58859,1.33196,1.6366$      \\
7  & $47$     & $1$    & $3.06462,-1.80798,-1.19626,-0.601743,1.85838,-1.73589$      \\
8  & $47$     & $1$    & $-0.601743,1.85838,-1.73589,3.06462,-1.80798,-1.19626$    \\
9  & $35$     & $1$    & $2.69346,-2.69346,1.0472,2.69346,-2.69346,1.0472$    \\ \hline
10 & $2$      & $2$    & $0.228519,-3.03687,1.02812,-0.75497,1.67552,-1.12999$ \\
11 & $8$     & $2$    & $2.36101,0.418879,-0.476051,1.75517,1.98968,-3.0118$       \\
12 & $32$     & $2$    & $-0.75497,1.67552,-1.12999,0.228519,-3.03687,1.02812$ \\
13 & $38$     & $2$    & $1.75517,1.98968,-3.0118,2.36101,0.418879,-0.476051$       \\ \hline
14 & $3$     & $3$    & $1.6366,-0.961707,-0.29493,-0.798841,-2.58859,1.33196$       \\
15 & $3$     & $3$    & $-0.798841,-2.58859,1.33196,1.6366,-0.961707,-0.29493$    \\
16 & $27$     & $3$    & $-1.73589,3.06462,-1.80798,-1.19626,-0.601743,1.85838$    \\
17 & $27$     & $3$    & $-1.19626,-0.601743,1.85838,-1.73589,3.06462,-1.80798$      \\
18 & $15$     & $3$    & $1.0472,2.69346,-2.69346,1.0472,2.69346,-2.69346$      \\ \hline
19 & $2$      & $4$    & $1.02812,-0.75497,1.67552,-1.12999,0.228519,-3.03687$ \\
20 & $8$     & $4$    & $-0.476051,1.75517,1.98968,-3.0118,2.36101,0.418879$       \\
21 & $32$     & $4$    & $-1.12999,0.228519,-3.03687,1.02812,-0.75497,1.67552$ \\
22 & $38$     & $4$    & $-3.0118,2.36101,0.418879,-0.476051,1.75517,1.98968$       \\ \hline
23 & $23$     & $5$    & $1.33196,1.6366,-0.961707,-0.29493,-0.798841,-2.58859$       \\
24 & $23$     & $5$    & $-0.29493,-0.798841,-2.58859,1.33196,1.6366,-0.961707$    \\
25 & $47$     & $5$    & $1.85838,-1.73589,3.06462,-1.80798,-1.19626,-0.601743$    \\
26 & $47$     & $5$    & $-1.80798,-1.19626,-0.601743,1.85838,-1.73589,3.06462$      \\
27 & $35$     & $5$    & $-2.69346,1.0472,2.69346,-2.69346,1.0472,2.69346$      \\ \hline
\end{tabular}
\caption{$(\omega_j, \tau_j, \xi_j)$ for $J_6^1$ in Table \ref{table:1}}
\label{table:2}
\end{table}

The center of the near-group category $J_6^1$ has the following simple objects:
\begin{itemize}
    \item[] six invertible objects $X_g$, $g\in\mathbb Z/6$;
    \item[] six $\chi_{15}^4$-dimensional objects $Y_h$, $h\in\mathbb Z/6$;
    \item[] fifteen $\chi_{15}^5$-dimensional objects $Z_{k,l}$, $k, l\in \mathbb Z/6$, $k< l$; and
    \item[] twenty-seven $\chi_{15}^3$-dimensional objects $W_{\omega_i,\tau_i, \xi_i}$, where $(\omega_i,\tau_i, \xi_i)$ are solutions in Table \ref{table:2}.
\end{itemize}

Applying the formulas for the modular data in Section \ref{Centers}, we have the following $T$- and $S$-matrices

$\begin{aligned} & T_{X_g}=e\left(\frac{5 g^2}{6}\right), \quad T_{Y_h}=e\left(\frac{5 h^2}{6}\right), \quad T_{Z_{k, l}}=e\left(\frac{5 k l}{6}\right), g, h, k, l \in \mathbb{Z} / 6, k<l, \\ & T_{W_{\omega, \tau, \xi}}=\operatorname{diag}\left[e\left(\frac{1}{5}\right), e\left(\frac{3}{10}\right), e\left(-\frac{3}{10}\right), e\left(-\frac{1}{5}\right), e\left(\frac{23}{60}\right), e\left(\frac{23}{60}\right), e\left(-\frac{13}{60}\right), e\left(-\frac{13}{60}\right), e\left(-\frac{5}{12}\right),\right. \\ & e\left(\frac{1}{30}\right), e\left(\frac{2}{15}\right), e\left(-\frac{7}{15}\right), e\left(-\frac{11}{30}\right), e\left(\frac{1}{20}\right), e\left(\frac{1}{20}\right), e\left(\frac{9}{20}\right), e\left(\frac{9}{20}\right), e\left(\frac{1}{4}\right), e\left(\frac{1}{30}\right), e\left(\frac{2}{15}\right), \\ & \left.e\left(-\frac{7}{15}\right), e\left(-\frac{11}{30}\right), e\left(\frac{23}{60}\right), e\left(\frac{23}{60}\right), e\left(-\frac{13}{60}\right), e\left(-\frac{13}{60}\right), e\left(-\frac{5}{12}\right)\right] ; \text { and }\end{aligned}$

\begin{equation*}
     S=\\\frac{1}{\lambda_1}\begin{bmatrix} s_{X,X} & \chi_{15}^4s_{X,X}&\chi_{15}^5s_{X,Z}&\chi_{15}^3s_{X,W}\\\chi_{15}^4s_{X,X}& s_{X,X} &\chi_{15}^5s_{X,Z}&-\chi_{15}^3s_{X,W}\\\chi_{15}^5s_{X,Z}^T&\chi_{15}^5s_{X,Z}^T,&\chi_{15}^5s_{Z,Z}&\mathbf 0\\ \chi_{15}^3s_{X,W}^T&-\chi_{15}^3s_{X,W}^T&\mathbf 0&s_{W,W}\end{bmatrix}, 
\end{equation*}

\begin{align*}
    & \lambda_1 = 6\sqrt{10\chi_{15}^4},\quad s_{X_g,X_h} = e\left(\frac{gh}{3}\right), \quad s_{X_g,Z_{k,l}}= e\left(\frac{g(k+l)}{6}\right) ,\quad  s_{Z_{k,l}, Z_{k',l'}} = e\left(\frac{kk'+ll'}{6}\right) + e\left(\frac{kl'+k'l}{6}\right), \\
&\text{where } g, h, k, k', l, l' \in \mathbb{Z} / 6, k\neq l, k'\neq l',
\end{align*}
and the entries for  $s_{X,W}$ and $s_{W,W}$ can be obtained from equations (\ref{Smatrix-center})and (\ref{Smatrix-S44}) using the solutions in Table \ref{table:2}. We provide detailed modular data in the Mathematica notebook \texttt{CenterofJ61.nb} in the \texttt{arxiv} source.
\begin{theorem}\label{thm:MD1}
 Let $\mathcal C$ be the Drinfeld center of a near-group category of type $\mathbb Z/6+6$. Then $\mathcal C\cong \mathcal D\boxtimes \mathcal C(\mathbb Z/3, q)$, where $\mathcal D$ is a spin modular category, and $q$ is its associated quadratic form restricted to $\mathbb Z/3$. Moreover, the Müger centralizer of the fermion $f$ in $\mathcal D$ is super-modular and either itself or one of its Galois conjugates has the same super-modular data as in (\ref{SMDS1}). 
\end{theorem}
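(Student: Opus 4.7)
The plan is to split off a modular factor from the pointed part of $\mathcal{C}$, identify a fermion in what remains, and extract the super-modular data from the M\"uger centralizer of that fermion. I would start by examining the pointed subcategory of $\mathcal{C}$, spanned by the six invertibles $X_g$ ($g \in \mathbb{Z}/6$) with quadratic form $q(g) = \langle g, g\rangle$ coming from the near-group bicharacter. Using $\mathbb{Z}/6 \cong \mathbb{Z}/2 \oplus \mathbb{Z}/3$, this pointed braided category factors as $\mathcal{C}(\mathbb{Z}/2, q|_{\mathbb{Z}/2}) \boxtimes \mathcal{C}(\mathbb{Z}/3, q|_{\mathbb{Z}/3})$; the two subgroups mutually centralize since $\langle 3, 2\rangle = e(5\cdot 6/6) = 1$. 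A direct computation using $\langle g, h\rangle = e(5gh/6)$ (for the $J_6^1$ case) shows that $q|_{\mathbb{Z}/3}$ is non-degenerate, so $\mathcal{C}(\mathbb{Z}/3, q)$ is modular, and that $q(3) = -1$, so $f := X_3$ is a fermion.

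Since $\mathcal{C}(\mathbb{Z}/3, q)$ is a modular fusion subcategory of the modular category $\mathcal{C}$, M\"uger's decomposition theorem yields $\mathcal{C} \cong \mathcal{D} \boxtimes \mathcal{C}(\mathbb{Z}/3, q)$ with $\mathcal{D} := C_{\mathcal{C}}(\mathcal{C}(\mathbb{Z}/3, q))$ modular of rank $54/3 = 18$. One checks $f \in \mathcal{D}$ from $\langle 3, 2\rangle = 1$, so $\mathcal{D}$ is spin modular. To extract the super-modular piece, I would identify the simples of $\langle f\rangle' \subseteq \mathcal{D}$ using the centralization criteria from Section \ref{NGcat} applied to the four families $X_g, Y_h, Z_{k,l}, W_j$. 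The count yields rank $10$ for $\langle f\rangle'$, organized into five $f$-orbits of size $2$ with representative dimensions $1, \chi_{15}^4, \chi_{15}^5, \chi_{15}^3, \chi_{15}^3$, matching the dimensions read off from the top row of (\ref{SMDS1}). Super-modularity of $\langle f\rangle'$ (i.e.\ $\operatorname{Sym}(\langle f\rangle') = \langle f\rangle \cong \sVec$) follows automatically from $\mathcal{D}$ being modular together with $f$ being a fermion.

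The remaining and most computationally demanding step is the explicit construction of $(\hat S, \hat T^2)$ for $\langle f\rangle'$ and the comparison with (\ref{SMDS1}). Picking a transversal of the $f$-orbits and reading off the appropriate entries from (\ref{Tmatrix-center})--(\ref{Smatrix-S44}) using the triples in Table \ref{table:2}, and then applying Theorem \ref{thm:s-matrixcondense} (or its analogue for the reduction $S = \hat S \otimes \tfrac{1}{\sqrt 2}\bigl(\begin{smallmatrix}1&1\\1&1\end{smallmatrix}\bigr)$) yields the data. The main obstacle is that distinct choices in Table \ref{table:1} and distinct transversals can produce data that are Galois conjugates of (\ref{SMDS1}) rather than literally equal to it, so one must verify that some combination realizes a member of the Galois orbit. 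The crucial numerical identities involve the $W$-block entries computed via (\ref{Smatrix-S44}), which must match $\tfrac{1}{2} \chi_5^1 \chi_{15}^3$ and $-\tfrac{2\sqrt{30 \chi_{15}^4}}{\chi_5^5}$ in the last $2\times 2$ block of (\ref{SMDS1}); these symbolic checks are carried out in the accompanying Mathematica notebook \texttt{CenterofJ61.nb}.
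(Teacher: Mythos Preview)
Your proposal is correct and follows essentially the same approach as the paper: factor off the modular pointed subcategory $\mathcal{C}(\mathbb{Z}/3,q)$ generated by $X_0,X_2,X_4$ via M\"uger's theorem, identify $f=X_3$ as a fermion in the complementary factor $\mathcal{D}$, and read off the super-modular data of $\mathcal{D}_0=\langle f\rangle'$ from the explicit modular data of the center. One small correction: Theorem~\ref{thm:s-matrixcondense} is not needed here, since no boson is condensed in the $\mathbb{Z}/6$ case; the $\hat S$-entries are obtained directly as the $S$-entries of $\mathcal{C}$ restricted to a transversal of the $f$-orbits in $\mathcal{D}_0$ (that theorem is only invoked in the paper for the $\mathbb{Z}/2\times\mathbb{Z}/4$ case).
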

\begin{proof}
We first consider the case $J_6^1$. Take the pointed modular category $\mathcal C(\mathbb Z/3, q)$ with $q=e^{2\pi i a^2/3}$, $a\in \mathbb Z/3$.  Notice $\mathcal{C}$ has $\mathcal C(\mathbb Z/3, q)$ as a pointed modular subcategory, generated by the invertible objects $X_0, X_2$ and $X_4$. Also, one observes that the invertible object $X_3$ is a fermion. Thus $\mathcal{C} \cong \mathcal D\boxtimes \mathcal C(\mathbb Z/3, q)$, where $\mathcal{D}$ is a spin modular category with the fermion $f=X_3$. Moreover, $\mathcal D\cong \mathcal{D}_0\oplus\mathcal{D}_1$, where $\DD_0=C_\mathcal{D}(\langle f \rangle)$ is a super-modular category.

Specifically, the spin modular category $\mathcal D$ is generated by the simple objects: $\mathbf 1$, $X_3$, $Y_0$, $Y_3$, $Z_{0,1}$, $Z_{0,3}$, $Z_{0,5}$, $Z_{1,5}$,  $Z_{2,4}$, $W_1,\ldots, W_{9}$. The 10 simple objects in $\mathcal D_0$ are  $\mathbf 1$, $X_3$, $Y_0$, $Y_3$,  $Z_{1,5}$,  $Z_{2,4}$, $W_1,\ldots, W_{4}$.  The resulting super-modular data is 
\begin{equation}
\hat{S} = \frac{1}{\sqrt{30 \chi_{15}^4}}
\begin{bmatrix}
1 & \chi_{15}^4 & \chi_{15}^5 & \chi_{15}^3 & \chi_{15}^3 \\
\chi_{15}^4 & 1 & \chi_{15}^5 & -\chi_{15}^3 & -\chi_{15}^3 \\
\chi_{15}^5 & \chi_{15}^5 & -\chi_{15}^5 & 0 & 0 \\
\chi_{15}^3 & -\chi_{15}^3 & 0 & \frac{1}{2} \chi_5^1 \chi_{15}^3 & -\frac{2 \sqrt{30 \chi_{15}^4}}{\chi_5^5} \\
\chi_{15}^3 & -\chi_{15}^3 & 0 & -\frac{2 \sqrt{30 \chi_{15}^4}}{\chi_5^5} & \frac{1}{2} \chi_5^1 \chi_{15}^3
\end{bmatrix},\quad \hat{T}^2 = \diag[1, 1,  e^{2 i \pi / 3}, e^{4i\pi/5}, e^{-4i\pi/5}],
\end{equation}
which coincides with  (\ref{SMDS1}). 

The other cases in Table \ref{table:1} can be similarly verified. Each of them provides a realization of the super-modular data in \cite[Table 6]{cho2022classification} with the global dimension 472.379.
\end{proof}

\subsection{$G=\mathbb Z/2\times \mathbb Z/4$} Up to equivalence, there are 4 inequivalent near group categories of type $\mathbb Z/2\times \mathbb Z/4+8$ listed in Table \ref{table:3} \cite[Proposition 6]{EvansGannon}.
\begin{table}[h]
\begin{tabular}{|c|c|c|c|c|}
\hline
\#            & $c$                & $\langle\, , \,\rangle$ & $a$          & $j(0,1),  j(0, 2),  j(1,0), j(1,1),  j(1,2)$                                                    \\ \hline
$J_{(2,4)}^1$ & $\zeta_{12}^{5}$  & $1$                     & $(1,\, 1)$   & $-0.992441,1.5708,0.785398,-1.42977,-0.785398$     \\ \hline
$J_{(2,4)}^2$ & $\zeta_{12}^{-5}$ & $-1$                     & $(-1,\, 1)$  & $0.992441,-1.5708,-0.785398,1.42977,0.785398$      \\ \hline
$J_{(2,4)}^3$ & $\zeta_{12}^{-5}$ & $1$                     & $(1,\, -1)$  & $1.42977,-1.5708,0.785398,-1.77784,-0.785398$      \\ \hline
$J_{(2,4)}^4$ & $\zeta_{12}^{5}$  & $-1$                    & $(-1,\, -1)$ & $-1.42977,1.5708,-0.785398,1.77784,0.785398$ \\ \hline
\end{tabular}
\caption{The data for near-group categories of type $\mathbb Z/2\times\mathbb Z/4+8$.}
\label{table:3}
\end{table}
In Table \ref{table:3}, the bicharacter for $\langle\, , \,\rangle$ is of the form $\langle (x_1, y_1), (x_2, y_2) \rangle=\exp \left(2 \pi i x_1 x_2 / 2\right) \exp \left(2 \pi i m y_1 y_2/4\right)$, where $m $ is given in the $\langle\, , \,\rangle$ column.  The functions $a$ are given by $a(x, y) = s_1^x s_2^y \exp \left(- \pi i x^2 / 2\right) \exp \left(- \pi i m y^2/4\right)$, where $s_1, s_2 \in\{\pm 1\}$ are given in the column for $a$. Note $b(x)=\frac{1}{\sqrt{n}} \exp (i j(x))$ if $x\neq 0$, where $j(x)\in [-\pi, \pi]$,  we list the values for  $j(x)$ in Table \ref{table:3} for the 5 values of $x$ needed to determine $b(x)$: the remaining values can be obtained from $b(0)=-\frac{1}{d}$ and $b(-x)=\overline{a(x) b(x)}$.

We consider the near-group $J_{(2,4)}^1$ and solve for the triples $(\omega_j, \tau_j, \xi_j)$ using the equations \eqref{eq:half1} - \eqref{eq:half4}. The solutions are listed in Table \ref{table:4}. The $\omega_j$  equals to $\zeta_{48}^k$ for some $k\in \mathbb Z$, which are recorded in the column of $\omega$. 
Since $\xi_j(g)=\exp(i\theta_{j,g})$, the values of $\theta_{j,g}$ are listed in the column of $\xi$
with the order  $(0, 0),\ldots,(0,3),(1,0),\ldots, (1,3) \in \mathbb Z/2\times\mathbb Z/4$. We also provide the solutions to $(\omega_j, \tau_j, \xi_j)$ and further computation of modular data in the \texttt{arxiv} source \texttt{CenterofJ241.nb}.

\begin{table}[H]
\begin{tabular}{|l|c|c|c|}
\hline
\# & $\omega$ & $\tau$   & $\xi$                                                    \\ \hline
1  & $16$     & $(0, 0)$ & $0,0.4373,0,-2.008,-1.571,3.076,1.571,-1.505$            \\
2  & $16$     & $(0, 0)$ & $0,-2.008,0,0.4373,1.571,-1.505,-1.571,3.076$            \\
3  & $40$     & $(0, 0)$ & $1.571,-1.505,-1.571,3.076,0,-2.008,0,0.4373$            \\
4  & $40$     & $(0, 0)$ & $-1.571,3.076,1.571,-1.505,0,0.4373,0,-2.008$            \\ \hline
5  & $21$     & $(0, 1)$ & $-1.134,1.003,-0.0339,3.045,-0.0339,3.045,-1.134,1.003$  \\
6  & $45$     & $(0, 1)$ & $-0.0339,3.045,-1.134,1.003,-1.134,1.003,-0.0339,3.045$  \\
7  & $13$     & $(0, 1)$ & $0.2202,-1.398,-1.675,-2.644,-2.842,-1.477,2.793,2.312$  \\
8  & $13$     & $(0, 1)$ & $2.793,2.312,-2.842,-1.477,-1.675,-2.644,0.2202,-1.398$  \\
9  & $37$     & $(0, 1)$ & $-1.675,-2.644,0.2202,-1.398,2.793,2.312,-2.842,-1.477$  \\
10 & $37$     & $(0, 1)$ & $-2.842,-1.477,2.793,2.312,0.2202,-1.398,-1.675,-2.644$  \\ \hline
11 & $4$      & $(0, 2)$ & $3.076,1.571,-1.505,-1.571,0.4373,0,-2.008,0$            \\
12 & $4$      & $(0, 2)$ & $-1.505,-1.571,3.076,1.571,-2.008,0,0.4373,0$            \\
13 & $28$     & $(0, 2)$ & $0.4373,0,-2.008,0,3.076,1.571,-1.505,-1.571$            \\
14 & $28$     & $(0, 2)$ & $-2.008,0,0.4373,0,-1.505,-1.571,3.076,1.571 $           \\ \hline
15 & $21$     & $(0, 3)$ & $1.003,-0.0339,3.045,-1.134,3.045,-1.134,1.003,-0.0339$  \\
16 & $45$     & $(0, 3)$ & $3.045,-1.134,1.003,-0.0339,1.003,-0.0339,3.045,-1.134$  \\
17 & $13$     & $(0, 3)$ & $2.312,-2.842,-1.477,2.793,-2.644,0.2202,-1.398,-1.675$  \\
18 & $13$     & $(0, 3)$ & $-1.398,-1.675,-2.644,0.2202,-1.477,2.793,2.312,-2.842$  \\
19 & $37$     & $(0, 3)$ & $-2.644,0.2202,-1.398,-1.675,2.312,-2.842,-1.477,2.793$  \\
20 & $37$     & $(0, 3)$ & $-1.477,2.793,2.312,-2.842,-1.398,-1.675,-2.644,0.2202$  \\ \hline
21 & $22$     & $(1, 0)$ & $-2.241,-1.335,-0.835,0.8973,1.455,3.03,0.04963,-1.022$  \\
22 & $22$     & $(1, 0)$ & $-0.835,0.8973,-2.241,-1.335,0.04963,-1.022,1.455,3.030$ \\
23 & $22$     & $(1, 0)$ & $0.04963,-1.022,1.455,3.03,-0.835,0.8973,-2.241,-1.335$  \\
24 & $22$     & $(1, 0)$ & $1.455,3.03,0.04963,-1.022,-2.241,-1.335,-0.835,0.8973$  \\
25 & $6$      & $(1, 0)$ & $2.235,2.673,2.235,2.673,1.168,-0.8402,1.168,-0.8402$    \\
26 & $6$      & $(1, 0)$ & $1.168,-0.8402,1.168,-0.8402,2.235,2.673,2.235,2.673$    \\ \hline
27 & $15$     & $(1, 1)$ & $0.6315,-3.119,2.979,-2.324,0.6315,-3.119,2.979,-2.324$  \\
28 & $39$     & $(1, 1)$ & $2.979,-2.324,0.6315,-3.119,2.979,-2.324,0.6315,-3.119$  \\
29 & $7$      & $(1, 1)$ & $2.65,1.091,2.736,-0.06957,1.658,0.09927,-0.3231,-3.129$ \\
30 & $7$      & $(1, 1)$ & $1.658,0.09927,-0.3231,-3.129,2.65,1.091,2.736,-0.06957$ \\
31 & $31$     & $(1, 1)$ & $2.736,-0.06957,2.65,1.091,-0.3231,-3.129,1.658,0.09927$ \\
32 & $31$     & $(1, 1)$ & $-0.3231,-3.129,1.658,0.09927,2.736,-0.06957,2.65,1.091$ \\ \hline
33 & $34$     & $(1, 2)$ & $-1.022,1.455,3.03,0.04963,0.8973,-2.241,-1.335,-0.835$  \\
34 & $34$     & $(1, 2)$ & $0.8973,-2.241,-1.335,-0.835,-1.022,1.455,3.03,0.04963$  \\
35 & $34$     & $(1, 2)$ & $3.03,0.04963,-1.022,1.455,-1.335,-0.835,0.8973,-2.241$  \\
36 & $34$     & $(1, 2)$ & $-1.335,-0.835,0.8973,-2.241,3.03,0.04963,-1.022,1.455$  \\
37 & $18$     & $(1, 2)$ & $2.673,2.235,2.673,2.235,-0.8402,1.168,-0.8402,1.168$    \\
38 & $18$     & $(1, 2)$ & $-0.8402,1.168,-0.8402,1.168,2.673,2.235,2.673,2.235$    \\ \hline
39 & $15$     & $(1, 3)$ & $-3.119,2.979,-2.324,0.6315,-3.119,2.979,-2.324,0.6315$  \\
40 & $39$     & $(1, 3)$ & $-2.324,0.6315,-3.119,2.979,-2.324,0.6315,-3.119,2.979$  \\
41 & $7$      & $(1, 3)$ & $1.091,2.736,-0.06957,2.65,0.09927,-0.3231,-3.129,1.658$ \\
42 & $7$      & $(1, 3)$ & $0.09927,-0.3231,-3.129,1.658,1.091,2.736,-0.06957,2.65$ \\
43 & $31$     & $(1, 3)$ & $-3.129,1.658,0.09927,-0.3231,-0.06957,2.65,1.091,2.736$ \\
44 & $31$     & $(1, 3)$ & $-0.06957,2.65,1.091,2.736,-3.129,1.658,0.09927,-0.3231$ \\ \hline
\end{tabular}

\caption{ $(\omega_j, \tau_j, \xi_j)$ for $J_{(2,4)}^1$ in Table \ref{table:3}.}
\label{table:4}
\end{table}

The center of the near-group category $J_{(2,4)}^1$ has rank $88$ with the following simple objects:
\begin{itemize}
    \item[] eight invertible objects $X_{g}$, $g\in \mathbb Z/2\times \mathbb Z/4$;
    \item[] eight $\chi_{24}^5$-dimensional objects $Y_h$, $h\in \mathbb Z/2\times \mathbb Z/4$;
    \item[] twenty-eight $2\chi_{6}^3$-dimensional objects $Z_{k,l}$, $k,l\in \mathbb Z/2\times\mathbb Z/4$, $k\neq l$; and
    \item[] forty-four $\chi_{24}^4$-dimensional objects $W_{\omega_i,\tau_i, \xi_i}$, where $(\omega_i,\tau_i, \xi_i)$ are solutions in Table \ref{table:4}. 
\end{itemize}

The modular data is given as the following:

$\begin{aligned} & T_{X_g}=e\left(\frac{2 g_1^2+g_2^2}{4}\right), \quad T_{Y_h}=e\left(\frac{2 h_1^2+h_2^2}{4}\right), \quad T_{Z_{k, l}}=e\left(\frac{2 k_1 l_1+k_2 l_2}{4}\right), \text { where } g, h, k, l \in \mathbb{Z} / 2 \times \mathbb{Z} / 4, k \neq l.\end{aligned} $

\begin{align*}& T_{W_{\omega, \tau, \xi}}=\operatorname{diag}\left[e\left(\frac{1}{3}\right), e\left(\frac{1}{3}\right), e\left(-\frac{1}{6}\right), e\left(-\frac{1}{6}\right), e\left(\frac{7}{16}\right), e\left(-\frac{1}{16}\right), e\left(\frac{13}{48}\right), e\left(\frac{13}{48}\right), e\left(-\frac{11}{48}\right), e\left(-\frac{11}{48}\right),\right. \\ & e\left(\frac{1}{12}\right), e\left(\frac{1}{12}\right), e\left(-\frac{5}{12}\right) e\left(-\frac{5}{12}\right), e\left(\frac{7}{16}\right), e\left(-\frac{1}{16}\right), e\left(\frac{13}{48}\right), e\left(\frac{13}{48}\right), e\left(-\frac{11}{48}\right), e\left(-\frac{11}{48}\right), e\left(\frac{11}{24}\right), e\left(\frac{11}{24}\right) \\ & e\left(\frac{11}{24}\right), e\left(\frac{11}{24}\right), e\left(\frac{1}{8}\right), e\left(\frac{1}{8}\right), e\left(\frac{5}{16}\right), e\left(-\frac{3}{16}\right), e\left(\frac{7}{48}\right), e\left(\frac{7}{48}\right), e\left(-\frac{17}{48}\right), e\left(-\frac{17}{48}\right), e\left(-\frac{7}{24}\right), e\left(-\frac{7}{24}\right) \\ & \left.e\left(-\frac{7}{24}\right), e\left(-\frac{7}{24}\right), e\left(\frac{3}{8}\right), e\left(\frac{3}{8}\right), e\left(\frac{5}{16}\right), e\left(-\frac{3}{16}\right), e\left(\frac{7}{48}\right), e\left(\frac{7}{48}\right), e\left(-\frac{17}{48}\right), e\left(-\frac{17}{48}\right)\right] ; \text { and }\end{align*}

\begin{align*}
&S=\frac{1}{\lambda_2} \begin{bmatrix} s_{X,X} & \chi_{24}^5s_{X,X}&2\chi_6^3s_{X,Z}&\chi_{24}^4s_{X,W}\\ \chi_{24}^5s_{X,X}& s_{X,X} &2\chi_6^3s_{X,Z}&-\chi_{24}^4s_{X,W}\\2\chi_6^3s_{X,Z}^T&2\chi_6^3s_{X,Z}^T,&2\chi_6^3s_{Z,Z}&\mathbf 0\\ \chi_{24}^4s_{X,W}^T&-\chi_{24}^4s_{X,W}^T&\mathbf 0&s_{W,W}\end{bmatrix}, 
\end{align*}

\begin{align*}
    &\lambda_2 = 16 \sqrt{3\chi_{24}^5},\quad s_{X_g,X_h}= e\left(-\frac{2g_1h_1+g_2h_2}{2}\right) , \quad s_{X_g,Z_{k,l}}= e\left(-\frac{2g_1(k_1+l_1)+g_2(k_2+l_2)}{4}\right),\\ &s_{Z_{k,l}, Z_{k',l'}} = e\left(-\frac{2k_1k_1'+k_2k_2'+2l_1l_1'+l_2l_2'}{4}\right) + e\left(-\frac{2k_1'l_1+k_2'l_2+2k_1l_1'+k_2l_2'}{4}\right),
\end{align*}
where $g, h, k, k^{\prime}, l, l^{\prime} \in \mathbb Z/2\times\mathbb Z/4, k \neq l, k^{\prime} \neq l^{\prime},$

and $s_{X,W}$ and $s_{W,W}$ can be computed from equations \ref{Smatrix-center} and \ref{Smatrix-S44} using the solutions in Table \ref{table:4}.

\begin{theorem}\label{thm:MD2}
    Let $\mathcal{C}$ be the Drinfeld center of a near-group category of type $\mathbb Z/2\times \mathbb Z/4 +8$. Then $\left[\mathcal{C}_{\mathbb \Z/2}\right]_0\cong \mathcal{D}\boxtimes \mathcal{C}(\mathbb Z/2, q)$, where $\mathcal{D}$ is a spin modular category and $q$ is the associated quadratic form restricted to $\mathbb Z/2$. Moreover, the Müger centralizer of the fermion $f$ in $\mathcal D$ is super-modular and either itself or one of its Galois conjugates has the same super-modular data as in (\ref{SMDS2}).  
\end{theorem}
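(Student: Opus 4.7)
The plan is to mimic the strategy used to prove Theorem \ref{thm:MD1}, but with the extra preliminary step of condensing a boson before splitting off a pointed modular factor. First I would locate a boson in the pointed part of $\mathcal{C}$. With the bicharacter $\langle (x_1,y_1),(x_2,y_2)\rangle = \exp(\pi i x_1 x_2)\exp(\pi i y_1 y_2/2)$ coming from $J^1_{(2,4)}$, the only order-$2$ element giving an invertible object with trivial self-braiding is $(0,2)$, so I take $b:=X_{(0,2)}$. Using the criteria in Section \ref{Centers}, I would identify its centralizer $\langle b\rangle'$: it consists of all $X_g,Y_h,Z_{k,\ell},W_{\omega,\tau,\xi}$ with $\langle(0,2),g\rangle^{-2}=1$, $\langle(0,2),h\rangle^{-2}=1$, $\langle(0,2),k+\ell\rangle=1$ and $\langle(0,2),\tau\rangle=1$, respectively. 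By the remarks before Example \ref{z8example} all $X_h$ and $Y_h$ lie in $\langle b\rangle'$ and none of the $Y_h$ is fixed by $b\otimes -$, while $W$'s and $Z$'s fixed by $b$ (type II) are possible, because $4\mid n=8$.

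Next I would carry out the $\mathbb{Z}/2$-de-equivariantization, applying the condensation functor $F:\langle b\rangle'\to [\mathcal{C}_{\mathbb{Z}/2}]_0$. For each orbit of $b\otimes-$ on simples in $\langle b\rangle'$ I record whether the image is type I (a single simple of the same dimension) or type II (two simples each of half the dimension), using twists to distinguish type-II splittings and checking the balance of total dimension $2\dim(\mathcal{C})=\dim([\mathcal{C}_{\mathbb{Z}/2}]_0)\cdot 2$ as a sanity check. I would then use Theorem \ref{thm:s-matrixcondense} to write down the $\tilde S$-matrix entries of $[\mathcal{C}_{\mathbb{Z}/2}]_0$ from those of $\mathcal{C}$ already tabulated above, and read off the $T$-matrix from $\theta_{F(X)}=\theta_X$ (with both type-II descendants inheriting the same twist).

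Having $[\mathcal{C}_{\mathbb{Z}/2}]_0$ in hand, I would locate its pointed subcategory. The images of $X_{(0,0)}$ and $X_{(0,2)}$ collapse to the unit, while the remaining four invertibles descend; among these, one element of order $2$ with trivial self-braiding generates a copy of $\mathcal{C}(\mathbb{Z}/2,q)$ that is modular (nondegenerate bicharacter inherited from $\langle\,,\,\rangle$ on the $\mathbb{Z}/2$-quotient). Since this pointed category is modular, it splits off as a Deligne factor, giving $[\mathcal{C}_{\mathbb{Z}/2}]_0\cong \mathcal{D}\boxtimes \mathcal{C}(\mathbb{Z}/2,q)$. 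The other order-$2$ invertible descends with self-braiding $-1$, which becomes the fermion $f$ of the spin modular category $\mathcal{D}$. I would then compute $\DD_0 = C_{\DD}(\langle f\rangle)$ by listing the simples $X$ of $\DD$ with $c_{f,X}c_{X,f}=\mathrm{Id}$, which is a super-modular category of rank $5$ whose $\hat S$ and $\hat T^2$ can be read off by restricting and renormalizing the matrices computed in the previous step.

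The final step is to match the resulting $(\hat S,\hat T^2)$ against (\ref{SMDS2}). Because the $\hat T^2$ eigenvalues come from various roots of unity in the $W$-block, one should not expect to land exactly on the tabulated representative: instead I would apply a suitable Galois automorphism of the cyclotomic field generated by the entries to realign the diagonal of $\hat T^2$ with $\operatorname{diag}[1,1,-1,-1,e^{2\pi i/3}]$, then verify that $\hat S$ is carried to the listed matrix. The bookkeeping of solutions $(\omega_j,\tau_j,\xi_j)$ in Table \ref{table:4} through the condensation is the main obstacle and is most cleanly handled symbolically (as in the Mathematica notebook \texttt{CenterofJ241.nb}); after that, the Galois matching is routine. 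The same argument applied to the other entries of Table \ref{table:3} yields the remaining cases, each realizing one of the Galois conjugates listed in \cite[Table 6]{cho2022classification}.
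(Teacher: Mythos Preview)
Your overall strategy matches the paper's: condense the boson $b=X_{(0,2)}$, split off a pointed modular $\mathbb{Z}/2$ factor, and recognize the remaining factor $\mathcal{D}$ as a spin modular category whose trivial component gives the desired super-modular data. However, there is a genuine gap in how you propose to obtain the $\hat{S}$-matrix.

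You write that Theorem \ref{thm:s-matrixcondense} lets you ``write down the $\tilde S$-matrix entries of $[\mathcal{C}_{\mathbb{Z}/2}]_0$'' and that the super-modular data ``can be read off by restricting and renormalizing.'' But Theorem \ref{thm:s-matrixcondense} is incomplete for this purpose: it computes $\tilde{S}_{\alpha,\beta}$ when both simples are of type I, and only the \emph{sum} $\tilde{S}_{\alpha,\beta_1}+\tilde{S}_{\alpha,\beta_2}$ when one is type I and the other comes from a type-II split. It says nothing about type-II/type-II entries, and it does not separate the two summands in the mixed case. In $\mathcal{D}_0$ the objects of dimension $\chi_6^3$ arise as type-II descendants of $Z_{(0,1),(0,3)}$ and $Z_{(1,1),(1,3)}$, so several entries of $\hat{S}$ are genuinely undetermined at this stage. (Note also that the two type-II descendants inherit \emph{equal} twists, so twists cannot distinguish them as you suggest.)

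The paper fills this gap with a substantive additional argument: it introduces unknowns $t,r,u_1,u_2,u_3,x,y$ for the undetermined entries, uses orthogonality of $\hat{S}$ to reduce to a single unknown $u_3$, and then analyzes the two possible duality scenarios for the $\chi_6^3$-dimensional simples. In the self-dual case the Verlinde formula for the naive fusion coefficients yields negative values, a contradiction; in the dual-pair case orthogonality leaves two complex-conjugate values for $u_3$, and a balancing-equation computation with the explicit twists and naive fusion rules selects the correct sign. This is the technical heart of the proof and your proposal does not account for it.

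Two smaller points: the pointed $\mathbb{Z}/2$ factor you split off is generated by a \emph{semion} ($F(X_{(0,1)})$ with twist $i$), not by an object with ``trivial self-braiding''---an order-$2$ invertible with trivial self-braiding is a boson and would not generate a modular subcategory. And $\mathcal{D}_0$ has rank $10$, not $5$; it is the $\hat{S}$-matrix that is $5\times 5$.
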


\begin{proof}
First, we examine the case $J_{2,4}^1$ in Table \ref{table:3}. Notice that the invertible simple object $X_{(0,2)}$ has twist 1, thus it is a boson. We can apply the boson condensation described in Section \ref{subsec:anyoncondensation} to obtain a modular category $\left[\mathcal{C}_{\mathbb Z/2}\right]_0$. Using the Verlinde formula with the  $S$ matrix associated to  $J_{2,4}^1$, we can obtain the fusion rules of $\mathcal{C}$. We collect the fusion rules of simple objects tensoring with $b$ and those centralized by $b$ in Table \ref{table:5}. Therefore, $[\mathcal{C}_{\mathbb Z/2}]_0$ is a modular category with the 36 simple objects in Table \ref{table:6}.

\begin{table}[h]
\begin{tabular}{|c|p{15.48cm}|}
\hline
dim &
   \begin{tabular} {|p{5.3cm}|p{8.2cm}|p{1.1cm}|}
$\qquad\qquad\qquad b\otimes -$ & & in $\langle b\rangle^\prime$?\end{tabular}\\
\hline
1&\begin{tabular}{|p{5.3cm}|p{8.2cm}|p{1.1cm}|}
$bX_{(i,j)}=bX_{(i,j+2)}$ & $(i,j)\in\mathbb Z/2\times\mathbb Z/4$ &Yes\end{tabular}\\
\hline
$\chi_{24}^5$&\begin{tabular}{|p{5.3cm}|p{8.2cm}|p{1.1cm}|}
$bY_{(i,j)}=bY_{(i,j+2)}$&$(i,j)\in\mathbb Z/2\times\mathbb Z/4$ &Yes\end{tabular}\\
\hline
$2\chi_6^3$&\begin{tabular}{|p{5.3cm}|p{8.2cm}|p{1.1cm}|}
$bZ_{(k_1,k_1),(l_1,l_2)}=Z_{(k_1,k_2),(l_1,l_2)}$&$(k_1,k_2),(l_1,l_2)=(0,0),(0,2);(0,1),(0,3);$&Yes\\
&$(1,0),(1,2);(1,1),(1,3)$&\\
\hline
$bZ_{(k_1,k_1),(l_1,l_2)}=Z_{(k_1,k_2+2),(l_1,l_2+2)}$&$(k_1,k_2),(l_1,l_2)=(0,0),(1,0);(0,0),(1,2);$&Yes\\
&$(0,1),(1,1);(0,1),(1,3)$&\\
\hline
$bZ_{(k_1,k_2),(l_1,l_2)}=Z_{(k_1,k_2+2),(l_1,l_2+2)}$&$(k_1,k_2),(l_1,l_2)=(0,0),(0,1);(0,0),(1,1);(0,0),(1,3);$&No\\
&($0,1),(1,0);(0,1),(1,2);(1,0),(1,1)$&\\
\hline
$bZ_{(k_1,k_2),(l_1,l_2)}=Z_{(k_1,k_2+1),(l_1,l_2-1)}$&$(k_1,k_2),(l_1,l_2)=(0,0),(0,3);(1,0),(1,3)$&No\end{tabular}\\
\hline
    $\chi_{24}^4$&\begin{tabular}{|p{5.3cm}|p{8.2cm}|p{1.1cm}|}
$bW_i=W_i$&$i=25,26,37,38$&Yes\\
\hline
$bW_i=W_{i+1}$&$i=1,3,11,13,21,23$&Yes\\
\hline
$bW_i=W_{i+1}$&$i=5,15,18,27,39,42$&No\\
\hline
$bW_i=W_{i+2}$&$i=7,8,29,30$&No\\
\hline
$bW_i=W_{i+2}$&$i=33,34$&Yes\\
\hline
$bW_i=W_{i+3}$&$i=17,41$&No\end{tabular}\\
\hline
\end{tabular}
\caption{Fusion rules of tensoring with $b$ and centralization by $b$}
\label{table:5}
\end{table}

\begin{table}[h!]
\begin{tabular}{|c|c|c|c|}
\hline
$\dim$        & Objects      & Twists               & Number Count \\ \hline
1             & \begin{tabular}[c]{@{}c@{}}$F(\mathbf 1)$, $F\left(X_{(0,1)}\right)$,  \\ $F\left(X_{(1,0)}\right)$, $F\left(X_{(1,1)}\right)$\end{tabular}                                                                                                                                                                           & $1, i, -1, -i $                                                                                                                                                                                                                            & $4$          \\ \hline
$\chi_{24}^5$ & \begin{tabular}[c]{@{}c@{}}$F\left(Y_{(0,0)}\right)$, $F\left(Y_{(0,1)}\right)$,  \\ $F\left(Y_{(1,0)}\right)$, $F\left(Y_{(1,3)}\right)$\end{tabular}                                                                                                                                                                & $1, i,  -1, -i $                                                                                                                                                                                                                           & $4$          \\ \hline
$2\chi_6^3$   & \begin{tabular}[c]{@{}c@{}}$F\left(Z_{(0,0),(1,0)}\right)$, $F\left(Z_{(0,0),(1,2)}\right)$, \\ $F\left(Z_{(0,1),(1,1)}\right)$, $F\left(Z_{(0,1),(1,3)}\right)$\end{tabular}                                                                                                                                         & $ 1, 1, i, -i $                                                                                                                                                                                                                           & $4$          \\ \hline
$\chi_6^3$    & \begin{tabular}[c]{@{}c@{}}$\left(Z_{(0,0),(0,2)}\right)_1,\left(Z_{(0,0),(0,2)}\right)_2,$\\ $\left(Z_{(0,1),(0,3)}\right)_1,\left(Z_{(0,1),(0,3)}\right)_2,$\\ $\left(Z_{(1,0),(1,2)}\right)_1,  \left(Z_{(1,0),(1,2)}\right)_2,$\\ $\left(Z_{(1,1),(1,3)}\right)_1$, $\left(Z_{(1,1),(1,3)}\right)_2$\end{tabular} & $ 1,1,-i,-i,-1,-1,i,i$                                                                                                                                                                                                                       & $8$          \\ \hline
$\chi_{24}^4$ & \begin{tabular}[c]{@{}c@{}}$F\left(W_1\right)$,  $F\left(W_3\right)$,  $F\left(W_{11}\right)$, \\ $F\left(W_{13}\right)$, $F\left(W_{21}\right)$,  $F\left(W_{23}\right)$,  \\ $F\left(W_{33}\right)$, $F\left(W_{34}\right)$\end{tabular}                                                                            & \begin{tabular}[c]{@{}c@{}}$e^{\frac{2 i \pi }{3}},e^{-\frac{i \pi}{3} },e^{\frac{i \pi }{6}},e^{-\frac{5 i \pi}{6}},$\\ $e^{\frac{11 i \pi}{12}}, e^{\frac{11 i \pi}{12}}, e^{-\frac{7 i \pi}{12}}, e^{-\frac{7 i\pi}{12} }$\end{tabular} & $8$          \\ \hline
$\chi_6^2$    & \begin{tabular}[c]{@{}c@{}}$\left(W_{25}\right)_1,\left(W_{25}\right)_2,\left(W_{26}\right)_1,$\\ $\left(W_{26}\right)_2,\left(W_{37}\right)_1,\left(W_{37}\right)_2,$\\ $\left(W_{38}\right)_1$,  $\left(W_{38}\right)_2$\end{tabular}                                                                               & \begin{tabular}[c]{@{}c@{}}$e^{\frac{i \pi }{4}},e^{\frac{i \pi }{4}},e^{\frac{i \pi }{4}},e^{\frac{i \pi }{4}},$\\ $e^{\frac{3 i \pi }{4}},e^{\frac{3 i \pi }{4}},e^{\frac{3 i \pi }{4}},e^{\frac{3 i \pi }{4}} $\end{tabular}            & $8$          \\ \hline
\end{tabular}

\caption{ Simple objects in $\left[\mathcal{C}_{\mathbb Z/2}\right]_0$}
\label{table:6}
\end{table}

 As $\theta_{F(X_{(0,1)})}=i $, $\left[\mathcal{C}_{\mathbb Z/2}\right]_0$ has a modular subcategory generated by the semion  $F(X_{(0,1)})=F(X_{(0,3)})$, which is equivalent to $\mathcal{C}(\mathbb Z/2, q)$, where $q= i^{x^2}$, $x\in \mathbb Z/2$. Thus $[\mathcal{C}_{\mathbb Z/2}]_0\cong\mathcal{D} \boxtimes \mathcal{C}(\mathbb{Z} / 2, q)$, where $\mathcal{D}$ contains the simple object $F(X_{(1,0)})=F(X_{(1,2)})$, which is a fermion since $\theta_{F(X_{1,0})}=-1$. We list the simple objects in the spin modular category $\DD$ in Table \ref{table:7}.
 
\begin{table}[h]
\begin{tabular}{|c|c|c|}
\hline
$\dim$         & Objects                                                                                                                       & Twists                                                                                      \\ \hline
$1$            & $F(\mathbf 1)$, $F({X_{(1,0)}})$                                                                                             & $1, -1$                                                                                     \\ \hline
$\chi_{24}^5 $ & $F(Y_{(0,0)}), F(Y_{(1,0)})$                                                                                                    & $1,  -1 $                                                                                   \\ \hline
$2\chi_6^3$    & $F(Z_{(0,0),(1,0)}), F(Z_{(0,0),(1,2)})$                                                                               & $1,\, 1$                                                                                      \\ \hline
$ \chi_6^3$    & \begin{tabular}[c]{@{}c@{}}$\left(Z_{(0,1),(0,3)}\right)_1, \left(Z_{(0,1),(0,3)}\right)_2,$\\ $\left(Z_{(1,1),(1,3)}\right)_1, \left(Z_{(1,1),(1,3)}\right)_2 $\end{tabular} & $-i, -i, i,i $                                                                              \\ \hline
$\chi_{24}^4$  & $ F(W_1), F(W_3), F(W_{21}), F(W_{23})$                                                                                       & $e^{\frac{2\pi i}{3}}, e^{-\frac{\pi i}{3}}, e^{\frac{11\pi i}{12}},e^{\frac{11\pi i}{12}}$ \\ \hline
$\chi_{6}^2$   & $\left(W_{25}\right)_1,\left(W_{25}\right)_2,\left(W_{26}\right)_1, \left(W_{26}\right)_2$                                    & $e^{\frac{i \pi }{4}},e^{\frac{i \pi }{4}},e^{\frac{i \pi }{4}},e^{\frac{i \pi }{4}}$       \\ \hline
\end{tabular}
\caption{ Simple objects in $\DD$}
\label{table:7}
\end{table}

The centralizer of the fermion $\DD_0$ is generated by the following 10 simple objects: $F(\mathbf 1)$, $F({X_{(1,0)}})$ , $F(Y_{(0,0)}), F(Y_{(1,0)})$, $\left(Z_{(0,1),(0,3)}\right)_1, \left(Z_{(0,1),(0,3)}\right)_2,$ $\left(Z_{(1,1),(1,3)}\right)_1, \left(Z_{(1,1),(1,3)}\right)_2 $, $F(W_1), $ and $F(W_3)$.

Using Theorem \ref{thm:s-matrixcondense} we find that the $\hat{S}$-matrix corresponding to $\DD_0$ is:
$$\frac{1}{\hat{\lambda}_2}\left[\begin{array}{ccccc}
1 & 5+2 \sqrt{6} & 3+\sqrt{6} & 3+\sqrt{6} & 4+2 \sqrt{6} 
\\
 5+2 \sqrt{6} & 1 & t  & r  & -4-2 \sqrt{6} 
\\
 3+\sqrt{6} & t  & \mathit{u}_1  & \mathit{u}_2  & x  
\\
 3+\sqrt{6} & r  & \mathit{u}_2  & \mathit{u}_3  & y  
\\
 4+2 \sqrt{6} & -4-2 \sqrt{6} & x  & y  & 4+2 \sqrt{6} 
\end{array}\right]
$$ where $\hat{\lambda}_2=2 \sqrt{6 \chi_{24}^5}$, $t+r=2(3+\sqrt{6})$ and $x+y=0$.  Orthogonality implies that $x=y=0$ and $t=r=3+\sqrt{6}$, and $\mathit{u}_1 = \mathit{u}_3,\; u_2 = -2\sqrt{6} - u_3 - 6$. 

Thus we have:

$$\frac{1}{\hat{\lambda}_2}\left[\begin{array}{ccccc}
1 & 5+2 \sqrt{6} & 3+\sqrt{6} & 3+\sqrt{6} & 4+2 \sqrt{6} 
\\
 5+2 \sqrt{6} & 1 & 3+\sqrt{6} & 3+\sqrt{6} & -4-2 \sqrt{6} 
\\
 3+\sqrt{6} & 3+\sqrt{6} & \mathit{u}_3  & -2 \sqrt{6}-\mathit{u}_3 -6 & 0 
\\
 3+\sqrt{6} & 3+\sqrt{6} & -2 \sqrt{6}-\mathit{u}_3 -6 & \mathit{u}_3  & 0 
\\
 4+2 \sqrt{6} & -4-2 \sqrt{6} & 0 & 0 & 4+2 \sqrt{6} 
\end{array}\right]
$$
Since we don't \emph{a priori} know if the objects of dimension $3+\sqrt{6}$ are a dual pair or self-dual, we must analyze both possibilities.

If they are self-dual orthogonality implies: $-2 \mathit{u}_3^{2}+\left(-4 \sqrt{6}-12\right) \mathit{u}_3 +30+12 \sqrt{6}=0$ so that $\mathit{u}_3=-\sqrt{6} - 3 \pm( 3\sqrt{2} -2\sqrt{3})$.  Using the Verlinde formula for the naive fusion rules \cite[Proposition 2.7]{bruillard2020classification} we compute all the $\hat{N}_{i,j}^k$ and find that these choices of $\mathit{u}_3$ yield negative fusion coefficients, a contradiction.

If they are a dual pair $\mathit{u}_3$ satisfies: $2 \mathit{u}_3^{2}+\left(4 \sqrt{6}+12\right) \mathit{u}_3 +90+36 \sqrt{6}=0$ which yields $\mathit{u}_3=3i \sqrt{2}+2 \,i \sqrt{3}-\sqrt{6}-3$ or $\mathit{u}_3=
-3 \,i \sqrt{2}-2 \,i \sqrt{3}-\sqrt{6}-3$.  These are complex conjugates of each other, so we get:

$$\frac{1}{\hat{\lambda}_2}\left[\begin{array}{ccccc}
1 & 5+2 \sqrt{6} & 3+\sqrt{6} & 3+\sqrt{6} & 4+2 \sqrt{6} 
\\
 5+2 \sqrt{6} & 1 & 3+\sqrt{6} & 3+\sqrt{6} & -4-2 \sqrt{6} 
\\
 3+\sqrt{6} & 3+\sqrt{6} & \pm i(3 \sqrt{2}+2  \sqrt{3})-\sqrt{6}-3 & \mp i(3 \sqrt{2}+2  \sqrt{3})-\sqrt{6}-3 & 0 
\\
 3+\sqrt{6} & 3+\sqrt{6} & \mp i(3 \sqrt{2}+2  \sqrt{3})-\sqrt{6}-3 & \pm i(3 \sqrt{2}+2  \sqrt{3})-\sqrt{6}-3 & 0 
\\
 4+2 \sqrt{6} & -4-2 \sqrt{6} & 0 & 0 & 4+2 \sqrt{6} 
\end{array}\right]
$$ where the 4 signs are determined by any single sign, which we now calculate.
 We first note that without loss of generality (see \cite{bruillard2020classification}) the twists are: $[1, 1,i,i,e^{-\pi i/3}]$, with corresponding objects $\one,\beta,\gamma_1,\gamma_2$ and $\eta$ of dimension $1,\chi^5_{24},\chi_6^3,\chi_6^3$ and $\chi_{24}^4$.  We denote by $f$ the fermion, so that the objects in $\DD_0$ can be labeled by the above and $f,f\beta,f\gamma_1,f\gamma_2$ and $f\eta$.  We aim to calculate $\hat{\lambda}_2\hS_{\gamma_1,\gamma_1}=\pm i(3 \sqrt{2}+2  \sqrt{3})-\sqrt{6}-3$.  Since $\gamma_1^*=\gamma_2$ the balancing equation gives us:

 $$-\hS_{\gamma_1,\gamma_1}=(\theta_{\gamma_1})^2\hS_{\gamma_1,\gamma_1}=\frac{1}{\sqrt{2}\hat{\lambda}_2}\sum_\psi N_{\gamma_2,\gamma_1}^\psi \theta_\psi d_\psi$$ where the sum is over the 10 simple objects in $\DD_0$.  The Verlinde formula for $\hS$ is invariant under complex conjugation so we obtain the (naive) fusion rules:

 $N_{\gamma_2,\gamma_1}^\one=1, N_{\gamma_2,\gamma_1}^\beta+N_{\gamma_2,\gamma_1}^{f\beta}=2$ and $N_{\gamma_2,\gamma_1}^\eta+N_{\gamma_2,\gamma_1}^{f\eta}=1$.  
 One can now simply try each of the two possibilities for $\hS_{\gamma_1,\gamma_1}$ with the fusion rules choices and check for consistency.  The only possibility is that $N_{\gamma_2,\gamma_1}^\beta=N_{\gamma_2,\gamma_1}^{f\beta}=1$ and $N_{\gamma_2,\gamma_1}^\eta=1$, so that $\hat{\lambda}_2\hS_{\gamma_1,\gamma_1}=i(3 \sqrt{2}+2  \sqrt{3})-\sqrt{6}-3$.  Noting that $\hT^2=\diag[1, 1,-1,-1,e^{-2\pi i/3}]$ we see that we have the complex conjugate of the data in (\ref{SMDS2}), which completes the proof.

\end{proof}

\section{Conclusions and Discussion}

We have now realized two previously unknown super-modular data using near-group categories.  We hope to use these techniques to further find modular and super-modular categories, aiding in continuing the classification of these categories.  Although there has been much work on understanding the $S$ and $T$ matrices of centers of near-group categories \cite{grossman2020infinite,EvansGannon,Budthesis}, some of these matrices are only conjectural.  We hope to continue to expand the list of categories for which these conjectures have been verified, along with their modular/super-modular data.

\subsection{Additional Examples}

Below in Table \ref{table:8} we have summarized some cases where a familiar category appears as a modular factor of a near-group center.  As we do not provide proofs the reader may take these as speculations. Moreover, we do not specify a category of the given type in Table \ref{table:8} so there is ambiguity in any case. The authors welcome comments and references for verifications/original sources for these statements.  

\begin{table}[h!]
\begin{tabular}{|l|l|l|}
\hline Type of $\FF$ & Conj. form of $\mathcal{Z}(\FF)$ & Notes \\
\hline\hline $\Z/1+1$&$\operatorname{Fib}\boxtimes \operatorname{Fib}^{\operatorname{rev}} $& $\FF=\operatorname{Fib}$ \\
\hline $\Z/2+2 $& $\mathrm{SU}(2)_{10}$ & cf. \cite{BRW}\\
\hline $\Z/3+3$& $\mathrm{G}(2)_3 \boxtimes \CC(\Z/3,Q)$ & $\rank(\mathrm{G}(2)_3)=6$ cf. \cite{NRWW} \\
\hline  $\Z/2\times \Z/2+4$  &$\left(\left[\left[\mathrm{SU}(2)_6^{\boxtimes2}\right]_{\Z/2}\right]_0\right)_{\Z/2}^{\times,\Z/2}$ & cf. \cite{bruillard2020classification}\\
\hline $\Z/4+4$  & $[\PSU(3)_5\boxtimes\CC(\Z/2,Q)]_{\Z/2}^{\times,\Z/2}$ & $\rank(\PSU(3)_5)=7$  \\
\hline $\Z/5+5 $& $\mathcal{B} \boxtimes \CC(\Z/5,Q)$ & $\rank(\mathcal{B})=8$\\
\hline $\Z/6+6 $ &$\DD\boxtimes\CC(\Z/3,Q) $& $\DD$ in Theorem \ref{thm:MD1}\\
\hline $\Z/7+7 $& $\mathcal{B} \boxtimes \CC(\Z/7,Q)$ & $\rank(\mathcal{B})=10$ \\
\hline $\Z/8+8 $ &$[G(2)_4\boxtimes \CC(\Z/4,Q)]_{\Z/2}^{\times,\Z/2}$&  $\rank(G(2)_4)=9$ \\
\hline$\Z/2\times\Z/4+8 $ &$[\DD\boxtimes \CC(\Z/2,Q)]_{\Z/2}^{\times,\Z/2}$ & $\DD$ in Theorem \ref{thm:MD2} \\
\hline
\end{tabular}
\caption{Familiar categories conjecturally related to centers of $G+n$ near-group categories.}
\label{table:8}
\end{table}

A few comments on the notation of Table \ref{table:8}:
\begin{itemize}
\item We denote a pointed modular category by $\CC(A,Q)$ where $Q$ is an unspecified non-degenerate quadratic form.
\item A $G$-gauging $\DD$ of $\BB$ \cite{CGPW} is denoted $\BB_G^{\times, G}$, which is a convenient way of saying that the category $[\DD_G]_0\cong\BB$.  
    \item The 4 rows corresponding to near-group categories associated with groups of order $4$ and $8$ can be understood as follows. The category $\mathcal{Z}(\FF)$ contains a boson, and after boson condensation one obtains the category $\BB$ where the second column has $\BB_{\Z/2}^{\times, \Z/2}$.
\end{itemize}


\bibliographystyle{plain}
\bibliography{Reference.bib}

\end{document}